\documentclass[reqno, 11pt]{amsart}
\usepackage{amsthm,amsmath,amssymb,graphicx,tikz,
  enumerate,paralist,bbm,mathrsfs}
\usepackage[margin=1in]{geometry}

\usetikzlibrary{decorations.pathreplacing}

\usepackage[color,final]{showkeys} %add in 'final' into parameter to remove
%showkeys
\usepackage[pdfstartview=FitV,hidelinks]{hyperref}

% showkeys font
\colorlet{refkey}{orange!20}
\colorlet{labelkey}{blue!30}
%\numberwithin{equation}{section}

% ------   Theorem Styles -------
\newtheorem{theorem}{Theorem}[section]
\newtheorem{proposition}[theorem]{Proposition}
\newtheorem{lemma}[theorem]{Lemma}
\newtheorem{corollary}[theorem]{Corollary}
\newtheorem{conjecture}[theorem]{Conjecture}

\theoremstyle{definition}
\newtheorem{definition}[theorem]{Definition}
\newtheorem{example}[theorem]{Example}

\theoremstyle{remark}

\newtheorem*{notation}{Notation}

% ----- Delimiters ----
\newcommand{\abs}[1]{\left\lvert#1\right\rvert}
\newcommand{\norm}[1]{\left\lVert#1\right\rVert}
\newcommand{\snorm}[1]{\lVert#1\rVert}
\newcommand{\ang}[1]{\left\langle #1 \right\rangle}

\newcommand{\paren}[1]{\left( #1 \right)}

\newcommand{\set}[1]{\left\{ #1 \right\}}

\newcommand{\wt}{\widetilde}

\renewcommand{\epsilon}{\varepsilon}
\newcommand{\del}{\partial}
\newcommand{\dashto}{\dashrightarrow}

\newcommand{\x}{\times}

\newcommand{\e}{\epsilon}

\newcommand{\GG}{\mathbb{G}}
\newcommand{\RR}{\mathbb{R}}

\newcommand{\NN}{\mathbb{N}}

\newcommand{\cA}{\mathcal{A}}
\newcommand{\cB}{\mathcal{B}}

\newcommand{\cW}{\mathcal{W}}

\newcommand{\cP}{\mathcal{P}}
\newcommand{\cQ}{\mathcal{Q}}
\newcommand{\cR}{\mathcal{R}}

\newcommand{\bx}{\mathbf{x}}
\newcommand{\bU}{\mathbf{U}}
\newcommand{\bW}{\mathbf{W}}

\newcommand{\sP}{\mathscr{P}}
\newcommand{\sQ}{\mathscr{Q}}

\title{Hypergraph limits: a regularity approach}
%\date{\today}
\author{Yufei Zhao}
\address{Department of Mathematics\\ MIT\\ Cambridge, MA 02139-4307.}
\email{yufeiz@math.mit.edu}

\begin{document}

\begin{abstract}
  A sequence of $k$-uniform hypergraphs $H_1, H_2, \dots$ is
  convergent if the sequence of homomorphism densities $t(F, H_1),
  t(F, H_2), \dots$ converges for every $k$-uniform hypergraph
  $F$. For graphs, Lov\'asz and Szegedy showed that every convergent
  sequence has a limit in the form of a symmetric measurable function
  $W \colon [0,1]^2 \to [0,1]$. For hypergraphs, analogous limits $W
  \colon [0,1]^{2^k-2} \to [0,1]$ were constructed by Elek and Szegedy
  using ultraproducts. These limits had also been studied earlier by
  Hoover, Aldous, and Kallenberg in the setting of exchangeable random
  arrays.

  In this paper, we give a new proof and construction of hypergraph
  limits. Our approach is inspired by the original approach of
  Lov\'asz and Szegedy, with the key ingredient being a weak
  Frieze-Kannan type regularity lemma.
\end{abstract}

\maketitle

\section{Introduction} \label{sec:intro}

One of the starting points in the theory of dense graph limits is the
seminal paper by Lov\'asz and Szegedy~\cite{LS06} where they
constructed limit objects for convergent sequences of dense
graphs. The subject has grown enormously since then with many exciting
developments (see Lov\'asz's recent monograph~\cite{L12}).

For any two graphs $F$ and $G$, let $\hom(F,G)$ denote the number of
homomorphism from $F$ to $G$, i.e., maps $V(F) \to V(G)$ that carry
every edge of $F$ to an edge of $G$. The homomorphism density $t(F,G)$
is
defined to be the probability that a random map $V(F) \to V(G)$ is a
homomorphism, i.e.,
\[
t(F,G) := \frac{\hom(F,G)}{\abs{V(G)}^{\abs{V(F)}}}.
\]
A sequence of graphs $G_1, G_2, \dots$ is called \emph{convergent} if
the sequence $t(F,G_1), t(F, G_2), \dots$
converges for every graph $F$. Convergent graph sequences were defined and
studied in~\cite{BCLSV08,BCLSV12}. The main result of Lov\'asz and
Szegedy~\cite{LS06} is that for every
convergent graph sequence there is a limit object in the form of a
\emph{graphon}, which is a symmetric measurable function $W \colon [0,1]^2
\to
[0,1]$ (here
\emph{symmetric} means that $W(x,y) = W(y,x)$) such that $t(F,G_n) \to
t(F,W)$ as $n \to \infty$ for all graphs $F$. Here $t(F,W)$ is defined by
\[
t(F,W) := \int_{[0,1]^{V(F)}} \prod_{ij \in E(F)} W(x_i,x_j) \,dx_1dx_2 \cdots dx_{\abs{V(F)}}
\]
The natural extension of these limits to hypergraphs was considered by
Elek and
Szegedy~\cite{ES12}. They constructed using ultraproducts an
``ultralimit hypergraph'' for any sequence of
hypergraphs, and established a
correspondence principle which enabled them to convert statements about
finite hypergraphs, such as hypergraph regularity and removal
lemmas~\cite{G07,NRS06,T06}, to measure-theoretic claims about ultralimit
spaces. One of the
consequences of their work is the existence of a limit object in the form
of
a measurable functions $W \colon [0,1]^{2^k-2} \to [0,1]$ for any
convergent sequence of $k$-uniform hypergraphs.

These limit objects had actually appeared earlier in a different form, in
the
study of exchangeable random arrays, initiated by Hoover~\cite{Hoo79},
Aldous~\cite{Ald81}, and Kallenberg~\cite{Kal92} during the 1980s,
building on
the classic de Finnetti's theorem on exchangeable random variables. This
connection is explained in the survey~\cite{Aus08} by Austin, where he
credits
Tao~\cite{Tao07} for initiating the link between exchangeable random
variables
and hypergraphs. These connections for graphs are also explained in the
survey
by Diaconis and Janson~\cite{DJ08} as well as Aldous' ICM
talk~\cite{Ald10}.

The purpose of this paper is to provide a new proof of the existence of
hypergraph limits. Our approach is based on weak Frieze-Kannan~\cite{FK99}
type
regularity partitions, in line with mainstream perspectives on dense graph
limits.
The proof does not use any exchangeable random variables or ultraproducts,
and
the construction of the limit is subjectively more concrete than earlier
proofs.
Our proof is inspired by the original approach of Lov\'asz and
Szegedy~\cite{LS06}, and the paper is self-contained other than an
application
of the
Martingale Convergence Theorem.

\subsection{Convergence and limit object}
For any $k$-uniform hypergraphs $F$ and $H$, let $\hom(F,H)$
denote the number of homomorphisms from $F$ to $H$, i.e., maps $V(F)
\to V(H)$ that carry every edge of $F$ to an edge of $H$. Define
$t(F,H) := \hom(F,H) / \abs{V(H)}^{\abs{V(F)}}$. This is the
probability that a random map $V(F) \to V(H)$ is a homomorphism.

\begin{definition}[Convergence]
  A
  sequence of $k$-uniform hypergraphs $H_1, H_2, \dots$ is called
  \emph{convergent}
  if the sequence $t(F,H_1), t(F,H_2), \dots$ converges for every
  $k$-uniform
  hypergraph $F$.
\end{definition}

For any positive integer $n$, define $[n]:=\set{1,2,\dots,n}$. For any set
$A$,
define $r(A)$ to be the
collection of all nonempty subsets of $A$, and $r_<(A)$ to be
collection of all nonempty proper subsets of $A$. More generally, let
$r(A,m)$ denote
the collection of all nonempty subsets of $A$ of size at most $m$. So
for instance, $r_<([k]) = r([k],k-1)$. We will also use the shorthand
$r[k]$ and
$r_<[k]$ to mean $r([k])$ and $r_<([k])$ respectively.

Any permutation $\sigma$ of a set $A$ induces a permutation
on $r(A,m)$. We say that a function $W \colon [0,1]^{r([k],m)} \to
[0,1]$ is \emph{symmetric} if it remains invariant under any
permutation of the coordinates induced by any permutation of
$[k]$. For example, $W \colon [0,1]^{r_<[3]} \to [0,1]$ being symmetric
means that
\begin{equation}\label{eq:3-symm}
  W(x_1,x_2,x_3,x_{12},x_{13},x_{23}) =
  W(x_{\sigma_1},x_{\sigma_2},x_{\sigma_3},x_{\sigma_1\sigma_2},x_{\sigma_1\sigma_3},x_{\sigma_2\sigma_3})
\end{equation}
for any permutation $\sigma$ of $\{1,2,3\}$. Here we write $x_i$ for
$x_{\set{i}}$ and $x_{ij}$ for $x_{\set{i,j}}$.

\begin{definition}
  A \emph{$k$-uniform hypergraphon} is a symmetric measurable function
  $W \colon [0,1]^{r_<([k])} \to [0,1]$.
\end{definition}

\begin{example}
  A 3-uniform hypergraphon is a measurable function $W \colon [0,1]^6
  \to [0,1]$ satisfying the symmetry condition \eqref{eq:3-symm}.
\end{example}

For
any $k$-uniform hypergraph $F$ and hypergraphon $W$, define
the homomorphism density by
\[
t(F,W) := \int_{[0,1]^{r(V(F),k-1)}} \prod_{A \in E(F)} W(\bx_{r_<(A)})
\,d\bx
\]
Our convention throughout the paper is that if
$\bx = (x_A: A \in \cA) \in [0,1]^{\cA}$ is a vector whose coordinates are
indexed by
some set system $\cA$, and $\cB \subseteq \cA$ is a subcollection, then we
write
$\bx_\cB = (x_B : B \in \cB) \in [0,1]^{\cB}$ to mean the restriction of
the
vector
to the coordinates indexed by $\cB$.

\begin{example}
  If $K_{4}^{(3)} = \{123,124,134,234\}$ is the complete 3-uniform
  hypergraph on
  4
  vertices and $W$ is a 3-uniform hypergraphon, then
  \begin{multline*}
    t(K_4^{(3)},W)
    = \int_{[0,1]^{10}} W(x_1,x_2,x_3,x_{12},x_{13},x_{23})
    W(x_1,x_2,x_4,x_{12},x_{14},x_{24})
    W(x_1,x_3,x_4,x_{13},x_{14},x_{34}) \cdot \\
    \cdot
    W(x_2,x_3,x_4,x_{23},x_{24},x_{34})
    \,dx_1dx_2dx_3dx_4dx_{12}dx_{13}dx_{14}dx_{23}dx_{24}dx_{34}.
  \end{multline*}
\end{example}

Every $k$-uniform hypergraph $H$ can be represented as a $k$-uniform
hypergraphon $W^H$ as follows: divide $[0,1]$ into $\abs{V(H)}$
equal-length intervals $\set{I_1, I_2, \dots, I_{\abs{V(H)}}}$. For each
$\bx
\in [0,1]^{r_<[k]}$ define
\[
W^H(\bx)
:= \begin{cases}
  1 & \text{if } x_{\set{i}} \in I_{a_i} \text{ for } i = 1, \dots, k
  \text{ and } \set{a_1, a_2, \dots, a_k} \text{ is an edge of $H$},
  \\
  0 & \text{otherwise.}
\end{cases}
\]
In particular, $W^H(\bx)$ depends only on the $k$ coordinates of
$\bx$ corresponding to subsets of $[k]$ of size 1. It can be
alternatively described as transforming the adjacency array of $H$ into a
$\{0,1\}$-valued step function and
then adding $2^k-2-k$ extra free coordinates. Observe
that $t(F,H) = t(F,W^H)$ for every $k$-uniform hypergraph $F$.

The main purpose of this paper is to give a new proof of the following
result \cite[Thm.~7]{ES12} on the existence of
hypergraph limits.

\begin{theorem}
  \label{thm:hypergraph-converge-limit}
  If $H_1, H_2, \dots$ is a convergent sequence of $k$-uniform
  hypergraphs, then there exists a $k$-uniform hypergraphon $W$ so
  that $t(F,H_n) \to t(F,W)$ as $n \to \infty$ for every $k$-uniform
  hypergraph $F$.
\end{theorem}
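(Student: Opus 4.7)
The plan is to follow the blueprint of Lov\'asz--Szegedy for graphons, replacing the Szemer\'edi-type regularity lemma with a weak Frieze--Kannan style regularity lemma for $k$-uniform hypergraphs and exploiting the hierarchical coordinate structure $r_<([k])$ of a hypergraphon. The target is to produce $W$ as the almost-everywhere limit of a martingale of step-hypergraphon approximations.

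First, I would establish a weak regularity lemma that for each $\epsilon > 0$ and each $H_n$ produces, at every level $j = 1, 2, \dots, k-1$, a partition --- of the vertex set, of the pairs of vertices given the vertex partition, and so on up to $(k-1)$-subsets --- together with a step hypergraphon $W_n^{(\epsilon)} \colon [0,1]^{r_<([k])} \to [0,1]$ approximating $W^{H_n}$ in a suitable cut-type norm, with total complexity bounded in terms of $\epsilon$ alone. The accompanying counting lemma should guarantee that $\abs{t(F, H_n) - t(F, W_n^{(\epsilon)})}$ is small in terms of $\epsilon$ and $\abs{V(F)}$. Choosing scales $\epsilon_m \downarrow 0$ and arranging partitions at stage $m+1$ to refine those at stage $m$ endows the approximations $(W_n^{(\epsilon_m)})_m$, for each fixed $n$, with a natural martingale structure on $[0,1]^{r_<([k])}$.

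Second, at each fixed scale $\epsilon_m$, the step hypergraphons $W_n^{(\epsilon_m)}$ have complexity bounded independently of $n$, and therefore, after quotienting by measure-preserving coordinate relabelings, lie in a precompact set. A diagonal argument extracts a subsequence of $n$ along which $W_n^{(\epsilon_m)}$ converges to some $W^{(m)}$ for every $m$ simultaneously. Because the refinement structure survives in the limit and the approximations are conditional expectations in the filtration generated by the multi-level partitions, the sequence $(W^{(m)})_m$ inherits a martingale structure, so the Martingale Convergence Theorem produces an a.e.\ limit $W$, our candidate hypergraphon. Combining the counting lemma (for each $m$, uniformly in $n$) with bounded convergence to pass $t(F, W^{(m)}) \to t(F, W)$ then yields $t(F, W) = \lim_n t(F, H_n)$.

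The main technical obstacle is the counting lemma, which is where the weak Frieze--Kannan framework has to be carefully tuned for $k$-uniform hypergraphs. For $k \ge 3$, control of $t(F, \cdot)$ cannot be achieved by a vertex-level regularity alone; the partitions at the levels $1, \dots, k-1$ must interact with the edge structure of $F$ through a telescoping replacement argument, in which each edge $A \in E(F)$ contributes a factor $W(\bx_{r_<(A)})$ to the integrand and is swapped for its step approximation one at a time. The precise form of cut norm used in the regularity statement must be engineered so that this telescoping yields cumulative error that vanishes with $\epsilon$, and reconciling this with the martingale/filtration picture above is the heart of the argument.
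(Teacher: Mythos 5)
There is a genuine gap at the heart of your second step, namely the claim that at a fixed scale $\e_m$ the approximations $W_n^{(\e_m)}$ ``have complexity bounded independently of $n$, and therefore \dots lie in a precompact set.'' For graphs this is a finite-parameter fact: a step graphon with a bounded number of steps is described by finitely many reals. For $k \geq 3$ it is not. The cells of your level-$j$ partitions ($2 \leq j \leq k-1$) are themselves graphs/lower-uniformity hypergraphs on growing vertex sets, so a ``bounded-complexity'' step hypergraphon is parametrized not by finitely many numbers but by a bounded-size tuple of lower-order combinatorial objects; extracting a limit of the $W_n^{(\e_m)}$ requires taking limits of those partition classes, whose limits are lower-order hypergraphons (not $\{0,1\}$-valued step data). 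So the fixed-scale compactness you invoke is essentially the hypergraph limit theorem one uniformity down, applied to tuples of objects that must remain a partition in the limit and must be taken compatibly across all scales $m$ simultaneously. This forces an induction on $k$ and some bookkeeping device (the paper uses ``branching partitions'' and the notion of partitionable convergence) that your proposal never sets up. Nor can you sidestep this by compactness in a cut-type metric at the top level: the norm strong enough to support your telescoping counting lemma (the analogue of $\norm{\cdot}_{\square^{k-1}}$) has no compactness for $k\geq 3$ (the triangles of $\GG(n,1/2)$ give a convergent sequence with no Cauchy subsequence in that norm), while the vertex-partition cut norm that does admit compactness fails the counting lemma for non-linear $F$.

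A consequence you have not addressed is that one counting lemma is not enough. Your limit object at scale $m$ is a step function over the \emph{limit} partition, and it is not close to $W^{H_n}$ (or to $W_n^{(\e_m)}$) in the strong cut norm; the cut-norm counting lemma only compares $W^{H_n}$ with its own regularization $W^{H_n}_{\cQ_n}$. To pass from $t(F, W_n^{(\e_m)})$ to $t(F, W^{(m)})$ you need a second comparison principle for step functions over two \emph{different} partitions, controlled by (i) closeness of the cell volumes and averages and (ii) closeness of the homomorphism densities of $\del F$ into the two partitions (the paper's Counting Lemma II), with (ii) supplied by the inductive limit statement at uniformity $k-1$. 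Without this ingredient, ``$W_n^{(\e_m)}$ converges to $W^{(m)}$'' has no mode of convergence attached that lets you conclude $t(F, W_n^{(\e_m)}) \to t(F, W^{(m)})$, and the final chain $t(F,H_n) \to t(F,W)$ does not close. The martingale/MCT part of your outline is fine and matches the standard scheme; it is the fixed-scale limit extraction and its interface with counting that needs the inductive machinery.
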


\subsection{Why are there so many coordinates?} \label{sec:many-coord}
It may initially seem somewhat strange that we need 6 coordinates
to describe the limit of 3-uniform hypergraphs, whereas every
3-uniform hypergraph can be described in terms of a 3-dimensional
adjacency array. These extra dimensions do not arise for limits of
graphs, but they are essential for hypergraphs. Here is a
standard example illustrating why functions of the form $[0,1]^3
\to [0,1]$ cannot capture the richness of 3-uniform hypergraph
limits. Take $G_n \sim \GG(n,1/2)$ to be a sequence of graphs on $n$
vertices, where each edge is generated with probability $1/2$, and let
$H_n$ be the 3-uniform hypergraph whose edges are the triangles of
$G_n$. Then with probability one,
$t(F, H_n) \to 2^{-|\del F|}$ for every 3-uniform hypergraph $F$, where
$\del F$
is the collection of unordered pairs of
vertices of $F$ that are contained in some edge of $F$. The limit of
$H_n$ is different from, say, the constant hypergraphon $1/2$, which
is the limit of a sequence of 3-uniform hypergraphs where every triple
of vertices is taken to be an edge independently with probability
$1/2$. To describe the limit of $H_n$, we need to incorporate
the limit of $G_n$ into the data, and this is achieved by the three
extra coordinates. We know that the graph sequence $G_n$ converges to the
constant graphon with value $1/2$. To build the limit of $H_n$, we
partition
each of the last three coordinates, $x_{12},x_{13},x_{23}$ into two
intervals
$[0,1/2]\cup(1/2,1]$, corresponding to the limit of $G_n$ and the
limit of its
complement.
The limiting hypergraphon has constant value 1 on $[0,1]^3 \x [0,1/2]^3$
(as the
edges of $H_n$ are supported on $G_n$) and 0
elsewhere. Intuitively, the first three coordinates encode the
vertex types, the last three coordinates encode the vertex-pair
types. This hypergraphon is $\set{0,1}$-valued since it is
deterministic once the vertex and vertex-pairs types are set. If we
modify the sequence $H_n$ so that each triangle of $G_n$ is included
as an edge of $H_n$ with some probability $p$ independently, then the
limiting hypergraphon would be constant $p$ on $[0,1]^3 \x
[0,1/2]^3$ and 0 elsewhere.

For $k$-uniform hypergraphs, we can similarly impose some
structure at each level, corresponding to $j$-element subsets of vertices,
for
every $1 \leq j \leq k$. This is why we need a coordinate for every proper
subset of $[k]$ to describe hypergraph limits.

\subsection{Random hypergraph model}

To further illustrate the involvement of the $2^k-2$ coordinates in a
hypergraphon, let us review the associated random hypergraph model.

Recall that if $W \colon [0,1]^2 \to [0,1]$ is a graphon, then we have
the following natural random graph model $\GG(n,W)$ on $n$ vertices:
choose i.i.d.\ uniform $x_1, x_2, \dots, x_n \in [0,1]$, and let there
be an edge between vertices $i$ and $j$ with probability $W(x_i,x_j)$
independently. It was shown \cite[Cor.~2.6]{LS06} using Azuma's
inequality that $\GG(n,W)$
converges to the limit $W$ almost surely.

Similarly, a $k$-uniform hypergraphon $W$ gives a natural model $\GG(n,W)$
of a
random $k$-uniform hypergraph on $n$ vertices: choose a uniformly
random $\bx \in [0,1]^{r([n],k-1)}$ and add the edge $B =
\set{i_1, \dots, i_k} \subseteq [n]$ with probability
$W(\bx_{r_<(B)})$ independently. Essentially the same proof for graphs
extend over to show \cite[Thm.~11]{ES12} that
$\GG(n,W)$ converges to $W$ in the sense of
Theorem~\ref{thm:hypergraph-converge-limit}, as $n \to
\infty$ with probability one. Observe that the random hypergraphs $H_n$ of
triangles in $\GG(n,1/2)$ discussed earlier is a special case of this
model.

\subsection{Analytic version and compactness}
It will be convenient to prove an analytic version of
Theorem~\ref{thm:hypergraph-converge-limit}. We say
that a sequence of $k$-uniform hypergraphons $W_1, W_2, \dots$ is
\emph{convergent} if the sequence $t(F, W_1), t(F, W_2), \dots$ converges
for
every
$k$-uniform hypergraph $F$.

\begin{theorem}
  \label{thm:hypergraphon-converge-limit}
  If $W_1, W_2, \dots$ is a convergent sequence of $k$-uniform
  hypergraphons,
  then there exists a $k$-uniform hypergraphon $\wt W$ so
  that $t(F,W_n) \to t(F,\wt W)$ as $n \to \infty$ for every $k$-uniform
  hypergraph $F$.
\end{theorem}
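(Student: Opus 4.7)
The plan is to follow the Lovász--Szegedy blueprint: approximate each $W_n$ by a step function coming from a weak regularity partition, extract a subsequence along which these approximations converge at every scale, and then assemble a limit hypergraphon $\wt W$ using the Martingale Convergence Theorem.

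First I would apply the weak Frieze--Kannan type regularity lemma (the main technical tool developed earlier in the paper) to each $W_n$. For every integer $m\ge 1$ this produces a step-function approximation $W_n^{(m)}$ obtained from a partition of $[0,1]$ (for the vertex coordinates) together with, for each resulting vertex-type tuple, a partition of $[0,1]$ in each pair-coordinate slot, and so on up through the $(k-1)$-element levels. The regularity lemma guarantees that $\abs{t(F,W_n)-t(F,W_n^{(m)})}$ is small, uniformly in $n$, in terms of $m$ and the size of $F$. I would arrange the partitions to be nested in $m$ so that $W_n^{(m)}$ equals the conditional expectation of $W_n^{(m+1)}$ against the coarser $\sigma$-algebra, and symmetric under the induced action of permutations of $[k]$.

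Next, for each fixed $m$ a step function $W_n^{(m)}$ is recorded (up to measure-preserving relabeling of parts) by a bounded tuple of real numbers lying in a compact set. A Cantor-style diagonal extraction then yields a subsequence $n_j$ such that, for every $m$, $W_{n_j}^{(m)}$ converges to some symmetric step function $\wt W^{(m)}$ (after the standard rearrangement that lines up the part boundaries). Nestedness is preserved in the limit, so $\wt W^{(m)}$ is the conditional expectation of $\wt W^{(m+1)}$ with respect to the limiting filtration on $[0,1]^{r_<[k]}$. The sequence $\wt W^{(m)}$ is therefore a uniformly bounded martingale, and the Martingale Convergence Theorem delivers an almost-everywhere and $L^1$ limit $\wt W$, which is symmetric, measurable, and $[0,1]$-valued, hence a $k$-uniform hypergraphon. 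For each hypergraph $F$, I would close the argument with the three-term split
\[
\abs{t(F,W_{n_j})-t(F,\wt W)}
\le \abs{t(F,W_{n_j})-t(F,W_{n_j}^{(m)})}
+ \abs{t(F,W_{n_j}^{(m)})-t(F,\wt W^{(m)})}
+ \abs{t(F,\wt W^{(m)})-t(F,\wt W)},
\]
controlling the first term by regularity, the second by the level-$m$ convergence from the diagonalization, and the third by $L^1$ convergence of $\wt W^{(m)}\to\wt W$ combined with the standard Lipschitz continuity of $t(F,\cdot)$ in an appropriate cut-type norm. Sending $m\to\infty$ after $j\to\infty$ gives convergence along the subsequence, and since $\set{t(F,W_n)}$ already converges by hypothesis, the full sequence converges to $t(F,\wt W)$.

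The main obstacle will be the multi-scale diagonalization: unlike the graph case, the regularity partition carries coupled data at every level $j\le k-1$, and the approximations at scale $m$ must be simultaneously refined and made compatible with those at scale $m+1$ so that the martingale structure is genuine. Keeping the partitions symmetric under permutations of $[k]$, consistent across all the coordinates indexed by $r_<[k]$, and still extracting a convergent subsequence $n_j$ that works at every level $m$ at once, is the delicate bookkeeping on which the whole argument hinges.
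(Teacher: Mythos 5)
Your overall architecture (regularize at every scale, extract subsequential limits of the step data, run a martingale argument, finish with a three-term split) is the same as the paper's, but there is a genuine gap at the heart of the extraction step. You treat the scale-$m$ regularity data as ``a bounded tuple of real numbers lying in a compact set'' that converges after ``the standard rearrangement that lines up the part boundaries.'' That is only true for graphs, where the weak regularity partition is a partition of $[0,1]$ into finitely many parts that can be rearranged into intervals. For $k\geq 3$ the partition that makes the counting lemma work is a partition of $[0,1]^{r[k-1]}$ into symmetric measurable sets (for $k=3$: a partition of the pair space into graph-like parts). These parts are infinite-dimensional objects: there is no finite parametrization and no boundary-lining rearrangement. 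Their limits can only be taken via the limit theory for $(k-1)$-uniform objects, i.e.\ by induction on $k$, and compatibly so that the limits still form a partition---this is exactly what the paper's branching partitions are engineered to do. Moreover the limiting partition is built by stacking the limiting $(k-1)$-hypergraphons along the extra coordinates of $[0,1]^{r[k-1]}$ (this is precisely where the $2^k-2$ coordinates enter), so $\wt W^{(m)}$ is generally \emph{not} close to $W_{n_j}^{(m)}$ in any cut-type norm, and is not even a step function over partitions of the same ``shape'' as the finite-$n$ ones.

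This gap propagates into your control of the middle term $\abs{t(F,W_{n_j}^{(m)})-t(F,\wt W^{(m)})}$. ``Level-$m$ convergence from the diagonalization'' cannot mean convergence in $\norm{\cdot}_{\square^{k-1}}$: the space of hypergraphons is not compact in that norm (triangles of $\GG(n,1/2)$ give a left-convergent sequence with no $\square^2$-convergent subsequence). And if you keep the data finite-dimensional by partitioning only the vertex coordinates, the counting lemma fails for non-linear $F$ (the paper's $(p,q)=(1/2,1)$ versus $(1,1/8)$ example has nearly equal vertex-cut data but very different $K_4^{(3)}$ densities). The paper bridges exactly this step with Counting Lemma II: step functions over two different partitions of $[0,1]^{r[k-1]}$ have close $F$-densities provided their quotients are $d_1$-close \emph{and} the homomorphism densities $t_\beta(\del F,\cdot)$ of the two partitions are close, the latter being supplied by the inductive hypothesis that partitionable convergence implies left-convergence one uniformity lower. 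Your first and third terms and the martingale/nestedness part are fine in outline (though nestedness in the limit also needs the quotient comparison, as in the paper's Claim 1), but the per-scale compactness you invoke is the actual mathematical content of the theorem and is missing from the proposal.
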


In this case we say that $W_n$ converges to $\wt W$. Here is an equivalent
formulation of the theorem.

\begin{theorem} \label{thm:hypergraphon-subseq}
  Every sequence $W_1, W_2, \dots$ of $k$-uniform hypergraphons contains
  a
  subsequence that converges to some $k$-uniform hypergraphon $\wt W$.
\end{theorem}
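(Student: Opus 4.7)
The plan is to adapt the Lovász--Szegedy proof for graph limits, with cut-norm regularity replaced by a hypergraph analogue as announced in the introduction. Given $W_1, W_2, \dots$, I would first produce, for each scale $\epsilon > 0$, a weak Frieze--Kannan type regularity lemma that associates to each $W_n$ a step-function hypergraphon $W_n^{(\epsilon)}$ of bounded complexity (depending only on $\epsilon$ and $k$) such that $W_n - W_n^{(\epsilon)}$ is small in a multilevel cut-type norm controlling $|t(F,W_n)-t(F,W_n^{(\epsilon)})|$ for every $F$ with $|V(F)| \le 1/\epsilon$. Here "step function" should mean constant on products $\prod_{S \in r_<[k]} P_S$, where $\{P_S\}$ are cells of a finite partition of $[0,1]$ chosen separately for each $S \in r_<[k]$; morally, $W_n^{(\epsilon)}$ is the conditional expectation of $W_n$ onto the sigma-algebra generated by such a product structure.

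Next I would apply the regularity lemma at scales $\epsilon_m \to 0$, taking the level-$(m{+}1)$ partitions to refine the level-$m$ partitions, and then diagonalize. For each fixed $m$, after rearranging each of the coordinates by a measure-preserving transformation of $[0,1]$, the step functions $W_n^{(\epsilon_m)}$ lie in a finite-dimensional compact set of symmetric $[0,1]$-valued functions, so some subsequence converges almost everywhere to a limit $\wt W^{(m)}$. A standard diagonal extraction yields a single subsequence $W_{n_j}$ along which $W_{n_j}^{(\epsilon_m)} \to \wt W^{(m)}$ for every $m$. Because the partitions are nested, $\{\wt W^{(m)}\}_m$ is a bounded martingale on $[0,1]^{r_<[k]}$, and the Martingale Convergence Theorem produces an almost-sure limit $\wt W := \lim_m \wt W^{(m)}$, which inherits symmetry and $[0,1]$-valuedness, hence is a hypergraphon.

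To conclude, I would verify $t(F,W_{n_j}) \to t(F,\wt W)$ for every $k$-uniform hypergraph $F$ via the triangle inequality
\[
|t(F,W_{n_j}) - t(F,\wt W)| \le |t(F,W_{n_j}) - t(F,W_{n_j}^{(\epsilon_m)})| + |t(F,W_{n_j}^{(\epsilon_m)}) - t(F,\wt W^{(m)})| + |t(F,\wt W^{(m)}) - t(F,\wt W)|.
\]
Given $\epsilon > 0$, I would choose $m$ so that $\epsilon_m$ is small relative to $|V(F)|$: the counting lemma bounds the first term uniformly in $j$; the second vanishes as $j \to \infty$ by almost-everywhere convergence of finite-complexity step functions, on which $t(F,\cdot)$ is continuous; and the third vanishes as $m \to \infty$ by bounded convergence applied to the martingale.

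The hard part will be setting up the weak regularity lemma together with a counting lemma of the right strength. Unlike the graph case, controlling $t(F,W)-t(F,W')$ for $k$-uniform $F$ requires control at every level $j = 1, \dots, k-1$ simultaneously, since each edge of $F$ probes all $2^k-2$ coordinates. The partitions must therefore be organized as a compatible tower, one partition per $S \in r_<[k]$, with an approximation norm that is additive across levels; otherwise, small error at the top level can be amplified by coarse structure at lower levels when plugged into $t(F,\cdot)$. Arranging this hierarchy so that nesting genuinely produces a martingale whose limit is probed correctly by all homomorphism densities is where the technical content of the argument lies.
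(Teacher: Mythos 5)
There is a genuine gap, and it sits exactly where you park the ``hard part'': the product structure of your proposed decomposition. You take the step functions to be constant on boxes $\prod_{S \in r_<[k]} P_S$ with a separate partition of $[0,1]$ for each coordinate, i.e.\ a conditional expectation onto a product sigma-algebra. No such bounded-complexity approximation can do the job, for the reason spelled out in \S\ref{sec:3-limit}: if your ``multilevel cut-type norm'' only tests against products of one-coordinate (or vertex-level) functions, then closeness in it does not control $t(F,\cdot)$ for non-linear $F$ (the triangles-of-$\GG(n,p)$ example with $(p,q)=(1/2,1)$ versus $(1,1/8)$ kills $K_4^{(3)}$); and if instead the norm is the one actually needed for the counting lemma, namely $\norm{\cdot}_{\square^{k-1}}$ with arbitrary symmetric test functions $u_i(\bx_{r([k]\setminus\{i\})})$, then box step functions of bounded complexity are \emph{not} close to $W_n$ in that norm (for $W^{H_n}$ with $H_n$ the triangles of $G_n\sim\GG(n,1/2)$, testing against $u_i = 1_{G_n}$ leaves a discrepancy bounded away from $0$ no matter how finely you partition each coordinate into intervals). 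So the first term of your triangle inequality cannot be made small uniformly in $j$, and weakening the norm to fix it destroys the counting lemma. Relatedly, your plan to rearrange coordinates by measure-preserving maps so that the regularity step functions converge almost everywhere is the graph-case trick; the cells that weak regularity actually produces here are arbitrary symmetric subsets of $[0,1]^{r[k-1]}$, which cannot be normalized into boxes, and there is no strong-norm compactness that would let their ``shapes'' converge pointwise.

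The paper's proof differs precisely at this point. The regularity partition $\cQ$ is a partition of $[0,1]^{r[k-1]}$ into general symmetric measurable sets (Lemma~\ref{lem:weak-reg}), and the sequence of these lower-level partitions is itself treated as a limit problem one level down: the argument is an induction on $k$, with branching partitions packaging the refining chains, and the limiting partition of $[0,1]^{r[k]}$ is built by stacking the limiting lower-level hypergraphons along the new coordinate $x_{[k]}$ (this is where the $2^k-2$ coordinates genuinely enter). The comparison you want to make by ``a.e.\ convergence of step functions'' is instead done by Counting Lemma II (Lemma~\ref{lem:count2}), which compares step functions over \emph{different} partitions $\cQ$ and $\cR$ via closeness of quotients plus convergence of the partitions' own homomorphism densities $t_\beta(\del F,\cdot)$ --- supplied by the induction hypothesis, not by pointwise convergence. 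Your martingale/Martingale Convergence Theorem endgame does match the paper's, but it only becomes available after this inductive, non-product setup; as written, your scheme cannot get past the regularity-plus-counting step.
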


Theorem~\ref{thm:hypergraphon-subseq} implies
Theorem~\ref{thm:hypergraphon-converge-limit}  trivially since we can just
take
the limit $\wt W$ produced by Theorem~\ref{thm:hypergraphon-subseq}. The
converse is true because $[0,1]^\NN$ is sequentially compact, so we can
restrict
$(W_n)$ to some subsequence $(W_{n_i})$ so that $t(F, W_{n_i})$ converges
as
$i\to \infty$ for every $F$.

We shall prove Theorem~\ref{thm:hypergraphon-subseq} with respect
to
another
notion of convergence based on regular partitions, which implies the
convergence of
homomorphism densities. The partition-based
convergence gives some structural insight into the convergence of
hypergraphs.

There is a neat interpretation of
Theorem~\ref{thm:hypergraphon-subseq} in terms of
compactness, discovered by Lov\'asz and Szegedy~\cite{LS07} in the case of
graphons. Let $\cW^{(k)}_0$ denote the set of $k$-uniform
hypergraphons. Give $\cW^{(k)}_0$ the weakest topology for which
the functions $t(F,\cdot)$ are continuous for every $k$-uniform
hypergraph $F$. Identify
$W$ with $W'$ if $t(F,W) = t(F,W')$ for every
$k$-uniform hypergraph $F$. Call this topology the
\emph{left-convergence topology} of $\cW^{(k)}_0$.

\begin{corollary}
  \label{cor:compact}
  The space $\cW_0^{(k)}$ with the
  left-convergence topology is compact.
\end{corollary}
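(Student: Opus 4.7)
The plan is to deduce this from Theorem~\ref{thm:hypergraphon-subseq} by realizing $\cW_0^{(k)}$ (modulo the identification in the definition of the left-convergence topology) as a sequentially compact subset of a compact metrizable space, namely $[0,1]^\NN$.

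First I would fix an enumeration $F_1, F_2, \dots$ of (isomorphism classes of) $k$-uniform hypergraphs; this is possible since there are only countably many such hypergraphs. Consider the map
\[
\Phi \colon \cW_0^{(k)} \to [0,1]^\NN, \qquad \Phi(W) := (t(F_n, W))_{n=1}^\infty.
\]
By the very definition of the left-convergence topology as the weakest topology for which each $t(F,\cdot)$ is continuous, and by the identification $W \sim W'$ whenever $t(F,W) = t(F,W')$ for every $F$, the map $\Phi$ descends to a topological embedding of $\cW_0^{(k)}$ into $[0,1]^\NN$ endowed with the product topology.

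Next I would invoke Theorem~\ref{thm:hypergraphon-subseq} to verify that the image $\Phi(\cW_0^{(k)})$ is sequentially compact. Given any sequence $(\Phi(W_n))$ in this image, Theorem~\ref{thm:hypergraphon-subseq} produces a subsequence $(W_{n_i})$ and a hypergraphon $\wt W \in \cW_0^{(k)}$ such that $t(F, W_{n_i}) \to t(F, \wt W)$ for every $k$-uniform hypergraph $F$. Coordinatewise convergence is precisely convergence in the product topology on $[0,1]^\NN$, so $\Phi(W_{n_i}) \to \Phi(\wt W)$ in $[0,1]^\NN$, with the limit lying in the image.

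Finally, since $[0,1]^\NN$ is metrizable (e.g.\ with metric $d(\bx,\by) = \sum_{n \geq 1} 2^{-n}\abs{x_n - y_n}$), sequential compactness and compactness coincide for its subsets; in particular a sequentially compact subset is closed and compact. Hence $\Phi(\cW_0^{(k)})$ is compact, and therefore so is $\cW_0^{(k)}$ in its left-convergence topology, as $\Phi$ is a topological embedding. There is really no substantive obstacle here: all of the content is packaged in Theorem~\ref{thm:hypergraphon-subseq}, and the remaining work is the standard observation that a second-countable Hausdorff topology in which every sequence has a convergent subsequence is compact.
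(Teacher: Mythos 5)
Your proof is correct and is essentially the paper's own argument: the paper metrizes the left-convergence topology directly via $\delta(W,W') = \sum_i 2^{-i}\abs{t(F_i,W)-t(F_i,W')}$ (which is the same as your embedding into $[0,1]^\NN$), and then concludes compactness from sequential compactness, with all the substance supplied by Theorem~\ref{thm:hypergraphon-subseq}.
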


\begin{proof}
  The space is metrizable with the metric $\delta(W, W') =
  \sum_{i \geq 1} 2^{-i} \abs{t(F_i,W) - t(F_i,W')}$ where
  $(F_i)$ is some enumeration of all isomorphism classes of
  $k$-uniform hypergraphs. We know that compactness is equivalent to
  sequential compactness in metric spaces, and
  Theorem~\ref{thm:hypergraphon-subseq} shows that the space is
  sequentially
  compact.
\end{proof}

When $k=2$, Lov\'asz and Szegedy~\cite{LS07} showed that $\cW_0^{(2)}$
is compact under the cut metric topology, and Borgs, Chayes, Lov\'asz,
S\'os, and Vesztergombi~\cite{BCLSV08} showed that the cut metric topology
is
equivalent to the left-convergence topology. Lov\'asz and Szegedy
interpreted the compactness with respect to the cut metric as an
analytic form of the regularity lemma, and they showed that the
compactness of the space of graphons
implies strong versions of the regularity lemma. Unfortunately,
for $k \geq 3$,
we do not know of a useful extension of the cut metric to hypergraphs
(and there may be some reasons to believe that such a natural metric might
be too much to ask for). This is one of the main obstacles in working with
convergence of hypergraphs. It would be nice to have a simple and useful
description of distance between hypergraphs which agrees with the topology
induced by
homomorphism densities.

\subsection{Organization} In \S\ref{sec:graphs} we review the
Lov\'asz-Szegedy
construction of graph limits. In \S\ref{sec:3-limit} we give an
informal sketch of the proof of the existence of 3-uniform hypergraph
limits.
Most of the ideas, minus the technical hairiness, are contained in
\S\ref{sec:3-limit}. The proof of the main result is contained in
\S\ref{sec:notation}--\ref{sec:branch}. \S\ref{sec:notation} collects some
of
the notation used in the proof. \S\ref{sec:reg-count} contains the
regularity
and counting lemmas central to the proof. In \S\ref{sec:branch} we
introduce
branching partitions and formulate the notion of partitionable convergence,
which implies, via counting lemmas, the convergence of homomorphism
densities.
We then prove the existence of limits with respect to partitionable
convergence.

\section{Limits of graphons} \label{sec:graphs}

For any symmetric measurable function $W \colon [0,1]^2 \to \RR$, the
cut norm is defined by
\begin{equation} \label{eq:cut}
  \norm{W}_\square := \sup_{S,T \subseteq [0,1]} \abs{\int_{S\x T} W(x,y)
    \,dxdy},
\end{equation}
where $S$ and $T$ range over all measurable subsets of $[0,1]$. We have
the identity
\begin{equation}
  \label{eq:cut-functional}
  \norm{W}_\square = \sup_{u,v \colon [0,1] \to [0,1]} \abs{\int
    W(x,y)u(x) v(y) \,dxdy}
\end{equation}
where $u$ and $v$ range over all measurable functions $[0,1] \to
[0,1]$. Indeed, since the integral in \eqref{eq:cut-functional} is linear
in
both $u$ and $v$, one can restrict to $\{0,1\}$-valued $u$ and $v$, thereby
reducing to \eqref{eq:cut}.

Recall that a graphon is a symmetric measurable function $W \colon
[0,1]^2 \to [0,1]$.
For any measure preserving bijection $\phi \colon [0,1] \to [0,1]$ and
any graphon $W$, define $W^\phi$ by $W^\phi (x,y) =
W(\phi(x),\phi(y))$. We define the cut distance between graphons by
\[
\delta_\square (U, W) = \inf_\phi \snorm{U^\phi - W}_\square,
\]
where the infimum is taken over all measure preserving bijections
$\phi \colon [0,1] \to [0,1]$. The cut distance can be defined for
pairs of graphs by considering their associated graphons. Graphs that are close in cut
distance are also close in homomorphism densities, by the following
counting
lemma.

\begin{lemma}[Counting lemma]
  \label{lem:2-count}
  For any graphons $U$ and $W$ and any graph $F$, we have
  \[
  \abs{t(F, U) - t(F,W)} \leq e(F) \norm{U - W}_\square
  \]
  where $e(F)$ is the number of edges of $F$.
\end{lemma}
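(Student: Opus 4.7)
The plan is a hybrid/telescoping argument, swapping one copy of $U$ for $W$ at a time and controlling each swap by a single application of the cut norm in its functional form~\eqref{eq:cut-functional}.

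First I would enumerate the edges of $F$ as $e_1, \dots, e_m$ with $m = e(F)$, and for $0 \le i \le m$ define the graphon product obtained by using $W$ on the first $i$ edges and $U$ on the remaining $m-i$ edges; write $t_i$ for the corresponding integral over $[0,1]^{V(F)}$. Then $t_0 = t(F,U)$, $t_m = t(F,W)$, and by the triangle inequality $|t(F,U) - t(F,W)| \le \sum_{i=1}^{m} |t_{i-1} - t_i|$. So it suffices to show $|t_{i-1} - t_i| \le \|U-W\|_\square$ for each $i$.

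Fix $i$ and write $e_i = \{a,b\}$. Using Fubini, I would integrate first over the variables $x_c$ with $c \in V(F) \setminus \{a,b\}$, pushing the integration over $x_a, x_b$ to the outside. With those other variables fixed, the factor coming from edge $e_i$ is $U(x_a,x_b) - W(x_a,x_b)$, while every other edge either is incident to $a$ but not $b$ (contributing a factor depending on $x_a$ and some fixed $x_c$'s), is incident to $b$ but not $a$ (a factor depending on $x_b$), or is incident to neither (a constant in $x_a, x_b$). Grouping, the inner integrand factors as $(U-W)(x_a,x_b)\, u(x_a)\, v(x_b)$ for some measurable $u,v \colon [0,1] \to [0,1]$, and by \eqref{eq:cut-functional} its absolute value is at most $\|U-W\|_\square$. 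Integrating this uniform bound against the remaining $[0,1]$-valued factors over the other $x_c$'s gives $|t_{i-1} - t_i| \le \|U-W\|_\square$.

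The only delicate point, and really the only thing to check, is that in the inner integral the dependence on $x_a$ and $x_b$ genuinely separates into a product $u(x_a)v(x_b)$ after the edge $e_i$ contribution is peeled off; this relies on $F$ being a simple graph, so no edge other than $e_i$ itself joins $a$ to $b$. Everything else is routine. Summing the $m$ inequalities yields $|t(F,U) - t(F,W)| \le e(F)\, \|U-W\|_\square$.
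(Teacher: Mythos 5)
Your proposal is correct and is essentially the same argument the paper gives: a telescoping sum over the edges, with each swapped term bounded by $\norm{U-W}_\square$ via the functional form \eqref{eq:cut-functional} after grouping the remaining factors (with the other variables fixed) into functions $u(x_a)$ and $v(x_b)$. The paper merely illustrates this through the example $F=K_3$, whereas you spell out the general grouping step, including the (correct) observation that simplicity of $F$ is what makes the dependence on $x_a$ and $x_b$ separate.
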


We illustrate the proof through the example $F = K_3$.
\begin{align*}
  &t(K_3,U) - t(K_3,W)
  \\
  &= \int_{[0,1]^3} (U(x,y)U(x,z)U(y,z) - W(x,y)W(x,z)W(y,z)) \,dxdydz
  \\
  &= \int_{[0,1]^3} (U(x,y)-W(x,y))W(x,z)W(y,z) \,dxdydz \\
  &\qquad    +  \int_{[0,1]^3} U(x,y)(U(x,z)-W(x,z))W(y,z) \,dxdydz \\
  &\qquad    +  \int_{[0,1]^3} U(x,y)U(x,z)(U(y,z)-W(y,z)) \,dxdydz
\end{align*}
Each of the three terms in the final sum is bounded in absolute value by
$\norm{U -
  W}_\square$. For example, for the first term, for every fixed value of
$z$,
the integral has the form \eqref{eq:cut-functional}, and so it is bounded in
absolute value by $\norm{U - W}_\square$, and the same bound holds after
integrating $z$ by the triangle inequality. It follows that $|t(K_3,U) -
t(K_3,W)| \leq 3\norm{U - W}_\square$.

\medskip

For any graphon $W$ and any partition $\cQ$ of $[0,1]$ into a finite
collection
of measurable
subsets, let $W_\cQ$ be a graphon which is
the step function obtained from $W$ by replacing its
value at $(x,y) \in Q_i \x Q_j$ by the
average value of $W$ on $Q_i \x Q_j$, for any $Q_i, Q_j \in \cQ$, (if
either
$Q_i$
or
$Q_j$ has measure zero, then assign value 0 on $Q_i \x
Q_j$). For graphs, think of $\cQ$ as a partition of the vertex set, and
$W_\cQ$
as recording the edge densities between pairs of vertex subsets.

A key tool in the construction of graph limits is the following
weak regularity lemma due to Frieze and
Kannan~\cite{FK99} (see also \cite[Lem~3.1]{LS07}). It can be proved by an
$L^2$-energy increment argument.

\begin{lemma}[Weak regularity lemma]
  \label{lem:2-weak-reg}
  For every $\e > 0$ and every symmetric measurable function $W \colon
  [0,1]^2
  \to
  [0,1]$, there is some partition $\cQ$ of $[0,1]$ into at most
  $2^{2/\e^2}$ parts such that $\norm{W - W_\cQ}_\square \leq \e$.
\end{lemma}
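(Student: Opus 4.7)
The plan is to use the standard $L^2$-energy increment. Define the \emph{energy} of a partition $\cQ$ by $\mathrm{en}(\cQ) := \int_{[0,1]^2} W_\cQ^2$. Since $W$ takes values in $[0,1]$, so does the conditional average $W_\cQ$, and hence $\mathrm{en}(\cQ) \in [0,1]$ for every $\cQ$. I would start from the trivial partition $\cQ_0 = \set{[0,1]}$ and iteratively refine: at step $i$, if $\norm{W - W_{\cQ_i}}_\square \leq \e$ then stop; otherwise build a refinement $\cQ_{i+1}$ that witnesses an energy gain of at least $\e^2$.

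For the refinement step, if $\norm{W - W_{\cQ_i}}_\square > \e$ then by the definition of the cut norm there exist measurable $S, T \subseteq [0,1]$ with $\abs{\int_{S \x T}(W - W_{\cQ_i})\, dx\, dy} > \e$. Let $\cQ_{i+1}$ be the coarsest common refinement of $\cQ_i$, $\set{S, S^c}$, and $\set{T, T^c}$, so $\abs{\cQ_{i+1}} \leq 4 \abs{\cQ_i}$. Since $S$ and $T$ are now unions of $\cQ_{i+1}$-parts we have $\int_{S \x T} W_{\cQ_{i+1}} = \int_{S \x T} W$, whence $\abs{\int_{S \x T}(W_{\cQ_{i+1}} - W_{\cQ_i})} > \e$. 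Cauchy--Schwarz then gives $\snorm{W_{\cQ_{i+1}} - W_{\cQ_i}}_2 > \e$. Finally, since $W_{\cQ_i}$ is the orthogonal projection of $W_{\cQ_{i+1}}$ onto the $L^2$ subspace of $\cQ_i$-step functions, the Pythagorean identity yields
\[
\mathrm{en}(\cQ_{i+1}) - \mathrm{en}(\cQ_i) = \snorm{W_{\cQ_{i+1}} - W_{\cQ_i}}_2^2 > \e^2.
\]

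Because the energy is capped at $1$, the process must terminate in at most $1/\e^2$ iterations, after which the resulting partition $\cQ$ satisfies $\abs{\cQ} \leq 4^{1/\e^2} = 2^{2/\e^2}$, as required. The only step that requires any care is the Pythagorean identity, which ultimately rests on recognising $W_\cQ$ as the conditional expectation of $W$ onto the $\sigma$-algebra generated by the product partition $\cQ \x \cQ$, so that conditional expectations onto nested $\sigma$-algebras compose in the expected way; everything else is a routine computation.
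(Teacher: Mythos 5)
Your proof is correct and is exactly the $L^2$-energy increment argument the paper has in mind: it only cites Frieze--Kannan and Lov\'asz--Szegedy for this lemma, but its own proof of the hypergraph generalization (Lemma~\ref{lem:weak-reg}) follows the same scheme of refining by the witnessing sets, using the orthogonality of the stepping (conditional expectation) operator, Cauchy--Schwarz, and the bounded energy to cap the number of iterations. The bookkeeping $4^{1/\e^2}=2^{2/\e^2}$ is also handled correctly, so there is nothing to add.
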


Lov\'asz and Szegedy~\cite{LS07} showed that with respect to the cut
metric,
after
identifying graphons with cut distance zero, the space of all graphons
is compact. Equivalently:

\begin{theorem}[Lov\'asz and Szegedy~\cite{LS07}]
  \label{thm:LS-analytic}
  Every sequence $W_1, W_2, \dots$ of graphons contains a subsequence
  converging to some graphon $\wt W$ in cut distance.
\end{theorem}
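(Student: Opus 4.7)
The plan is to combine the weak regularity lemma with the martingale convergence theorem. For each $n$ and each positive integer $k$, I will iteratively apply Lemma~\ref{lem:2-weak-reg} at successive scales $1, 1/2, \dots, 1/k$, each time refining the previous partition with the one produced by the lemma, to obtain a nested sequence of partitions $\cP_{n,1}, \cP_{n,2}, \dots$ of $[0,1]$ such that $\cP_{n,k+1}$ refines $\cP_{n,k}$, the number of parts of $\cP_{n,k}$ is bounded by some $N_k$ depending only on $k$, and $\snorm{W_n - (W_n)_{\cP_{n,k}}}_\square \leq 1/k$. Since cut distance is invariant under measure-preserving rearrangements of $[0,1]$, I will then precompose each $W_n$ by a single measure-preserving bijection $\phi_n$ that straightens every $\cP_{n,k}$ into a partition of $[0,1]$ into consecutive intervals listed in a canonical order. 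The nesting of the $\cP_{n,k}$ in $k$ is exactly what makes one such $\phi_n$ work for all $k$ at once.

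After this reduction, $(W_n)_{\cP_{n,k}}$ is encoded by the interval-length vector in $[0,1]^{N_k}$ together with the density matrix in $[0,1]^{N_k \times N_k}$, which lives in a compact finite-dimensional set. A standard diagonal argument extracts a subsequence $(n_i)$ along which this encoding converges for every $k$ simultaneously, producing symmetric step functions $U_k \colon [0,1]^2 \to [0,1]$ with $\snorm{(W_{n_i})_{\cP_{n_i,k}} - U_k}_1 \to 0$, and hence also in cut norm, as $i \to \infty$ for each fixed $k$.

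The nesting of the partitions persists in the limit, so $U_k$ is the conditional expectation of $U_{k+1}$ with respect to the $\sigma$-algebra generated by the limiting level-$k$ interval partition of $[0,1]^2$. Thus $(U_k)$ is an $L^\infty$-bounded martingale on $[0,1]^2$, and the martingale convergence theorem produces an almost-everywhere and $L^1$ limit $\wt W \colon [0,1]^2 \to [0,1]$, which is symmetric and measurable, hence a graphon. Given $\eta > 0$, I choose $k$ large so that both $1/k$ and $\snorm{U_k - \wt W}_1$ are less than $\eta/3$, then $i$ large so that $\snorm{(W_{n_i})_{\cP_{n_i,k}} - U_k}_1 < \eta/3$. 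The triangle inequality together with the basic bound $\snorm{\cdot}_\square \leq \snorm{\cdot}_1$ then gives $\snorm{W_{n_i} - \wt W}_\square < \eta$, so $W_{n_i} \to \wt W$ in cut distance after undoing the rearrangements $\phi_{n_i}$.

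The main obstacle is the bookkeeping that produces nested partitions at every scale with a uniform (in $n$) bound on the number of parts, and then aligns them across different $n$ via a single rearrangement $\phi_n$ per graphon so that the finite-dimensional compactness argument and the martingale structure are both available simultaneously. Once this is in place, the argument is a clean marriage of the weak regularity lemma, finite-dimensional sequential compactness, and the martingale convergence theorem — and it is the analogue of this whole package at each level $j \in \{1, \dots, k-1\}$ of subsets that will be needed for the hypergraph generalization.
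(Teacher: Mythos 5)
Your proposal is correct and follows essentially the same route as the paper's sketch: weak regularity partitions at scales $1/k$, nested refinements straightened into interval partitions by one measure-preserving rearrangement per graphon, a diagonal subsequence making the step functions converge to step-function limits, the martingale structure of these limits, and the Martingale Convergence Theorem, followed by the final triangle-inequality check (which the paper leaves to the reader and you spell out). No substantive differences to report.
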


Let us recall the idea of the proof of Theorem~\ref{thm:LS-analytic}.
Let $\e > 0$. We apply the weak regularity lemma to approximate
every $W_n$ by some $(W_n)_{\cQ_n}$. By
replacing each $W_n$ by some $W_n^{\phi_n}$ for some measure preserving
bijection $\phi_n$, we may assume that the
partition $\cQ_n$ divides $[0,1]$ into intervals. Take a
subsequence so that the lengths of the intervals converge, and the
values of $(W_n)_{\cQ_n}$ inside the boxes induced by the partition also
converge, i.e., the value inside the $(i,j)$-th box of $(W_n)_{\cQ_n}$
converges to some value as $n \to \infty$ (may be different limits for
different $(i,j)$). Then in this subsequence, $(W_n)_{\cQ_n}$ converges
pointwise almost everywhere to some limit $\wt U_1$, which is also a
step function.

Now repeat the same procedure with a smaller $\e' < \e$. We obtain new
partitions $\cQ'_n$ which are refinements of previous partitions. Call
the resulting limit $\wt U_2$. Note that steps of $(W_n)_{\cQ'_n}$ are
refinements of the steps of $(W_n)_{\cQ_n}$, and the values of the
latter can be obtained from the former by averaging over each
step. Thus a similar relation holds for $\wt U_2$ and $\wt U_1$.

Now we repeat this procedure for a sequence of $\e_k$ tending to
zero. We obtain a sequence $\wt U_1, \wt U_2, \dots$ of step functions so
that
each $\wt U_s$ can be obtained from $\wt U_{s+1}$ by average over each
step. It follows that if $(X,Y)$ is a uniform random point in
$[0,1]^2$, then the sequence $(\wt U_1(X,Y), \wt U_2(X,Y), \dots)$ is a
martingale. Since every $\wt U_s$ is bounded, the Martingale Convergence
Theorem\footnote{The Martingale Convergence Theorem
  (see~\cite[Thm.~11.5]{W91})
  says that every $L^1$-bounded martingale converges almost surely. Our
  martingales
  are actually bounded uniformly within $[0,1]$.}
implies that the martingale
converges with
probability 1, and hence there is some $\wt W \colon [0,1]^2 \to [0,1]$
which is the pointwise almost everywhere limit of $\wt U_s$'s. One
then checks that $\wt W$ is the desired limit.

In summary, the above proof consists of two main steps:
\begin{enumerate}
\item For each error tolerance $\e$, apply a weak regularity lemma to
  get a finite-dimensional step function approximation of each graphon.
  Take a
  subsequence so that the step functions converge.
\item Take a decreasing sequence of $\e$ tending to zero, we obtain
  refining
  chains of regularity partitions, and the corresponding subsequential
  limits $\wt
  U_s$ form a
  martingale. The existence of the final limit graphon follows by the
  Martingale
  Convergence Theorem.
\end{enumerate}

\section{Limits of 3-uniform hypergraphs} \label{sec:3-limit}

In this section we sketch the idea for 3-uniform hypergraph limits.
To keep things simple, consider a sequence $H_1, H_2, \dots$ of 3-uniform
hypergraphs (as opposed to hypergraphons).

We begin with an initial attempt that does not quite work. For a 3-variable
function $W \colon [0,1]^3 \to \RR$, we might extend the cut norm
\eqref{eq:2-cut} as follows (assume everything is measurable from now
on):
\begin{equation} \label{eq:3-vertex-cut}
  \text{(bad cut norm)}\quad
  \norm{W}_\square = \sup_{R,S,T \subseteq [0,1]} \abs{\int_{R\x S \x T}
    W(x,y,z)
    \,dxdydz}.
\end{equation}
For each hypergraph $H$, one can easily extend the weak regularity lemma,
Lemma~\ref{lem:2-weak-reg}, to obtain a partition $\cQ$ of the vertex set of
$H$ into at most $2^{3/\e^2}$ parts so that $\norm{W^H - W^H_\cQ}_\square \leq
\e$ (regard $W^H$ as a 3-variable function for now, and $W^H_\cQ$ is derived
from $W$ by averaging over each cells induced by $\cQ$).
Theorem~\ref{thm:LS-analytic} also extends with virtually no change in the
proof. That is, allowing permutations of vertices, some subsequence of $H_n$
converges with respect to the vertex-cut norm \eqref{eq:3-vertex-cut} to a
3-variable symmetric function $\wt W \colon [0,1]^3 \to [0,1]$.

Unfortunately, the vertex-cut norm \eqref{eq:3-vertex-cut} is not strong enough
to guarantee a counting lemma. We want to say that if $H_1$ and $H_2$ are close with
respect to some cut norm, then $t(F, H_1)$ and $t(F, H_2)$ are close. If we
carry through the proof of Lemma~\ref{lem:2-count}, we find that $|t(F,H_1) -
t(F,H_2)| \leq e(F)\snorm{W^{H_1} - W^{H_2}}_\square$ holds when $F$ is a
\emph{linear hypergraph}, i.e., where every two edges of $F$ intersect in at
most one vertex. However, when $F$ is not linear, say $F = K_4^{(3)}$, then
this claim is completely false, as $t(F,H_1)$ and $t(F,H_2)$ can be separated
even when $\snorm{W^{H_1} - W^{H_2}}_\square$ is small. A counterexample for
$3$-uniform hypergraphs can be built by taking triangles of the random graph
$\GG(n,p)$, and then
keeping each triangle as a $3$-uniform edge with some probability
$q$. With
parameters $(p,q) = (1/2,1)$
and $(1,1/8)$, we obtain $3$-uniform hypergraphs that are close with respect to
the vertex-cut norm, and yet they have very different $K_4^{(3)}$ densities.

Now let us scrap the vertex-cut norm \eqref{eq:3-vertex-cut}. The proof of
the
counting lemma, Lemma~\ref{lem:2-count}, extends with respect to the
following
modified cut norm (again we use a 3-variable $W$ for now):
\begin{equation} \label{eq:2-cut}
  \text{(better cut norm)}\quad
  \norm{W}_{\square^2} = \sup_{\substack{u,v,w \colon [0,1]^2 \to [0,1] \\
      \text{symmetric}}} \abs{\int_{[0,1]^3} W(x,y,z)u(x,y)v(x,z)w(y,z)
    \,dxdydz}.
\end{equation}
For this cut norm, the counting lemma $|t(F,H_1) - t(F,H_2)| \leq
e(F)\snorm{W^{H_1} - W^{H_2}}_{\square^2}$ holds. However, like trying to
fit a
large rug in a small room, we quickly run into another issue: this norm is
too
strong and we do not have the compactness result corresponding to
Theorem~\ref{thm:LS-analytic}. Indeed, taking the sequence $H_n$ of
triangles of
$\GG(n,1/2)$ from \S\ref{sec:many-coord}, the two hypergraphs $H_n$ and $H_m$
are
typically not close with respect to $\norm{\cdot}_{\square^2}$, although
they
are close in homomorphism densities.

Even though we do not have compactness with respect to
$\norm{\cdot}_{\square^2}$,
we can still hope for a slightly weaker topology that gives
convergence of homomorphism densities. We can extend the weak regularity
lemma,
Lemma~\ref{lem:2-weak-reg}, to $\norm{\cdot}_{\square^2}$, where now
instead of
partitioning the vertex set $V = V(H)$, we partition the edges of the
underlying
complete graph $K_V = \binom{V}{2}$, i.e., the collection of unordered
pairs of
$V$. So now $\cQ$ is a partition $K_V = G_1 \cup \cdots \cup G_m$ of the
edges
of $K_V$ into $m$ graphs. The partition $\cQ$ of $K_V$ induces a partition
$\cQ^*$ on triples of vertices:
\[
(x,y,z) \sim_{\cQ^*} (x',y',z') \Leftrightarrow (x,y)\sim_\cQ (x',y'),\
(x,z)
\sim_\cQ (x',z'), \text{ and } (y,z) \sim_\cQ (y',z').
\]
Being somewhat sloppy with notation for the time being, we can form
$W^H_\cQ$
by averaging $W^H$ inside each cell of $\cQ^*$. Then the weak regularity
lemma
guarantees us a partition $\cQ$ of $K_V$ into at most $2^{3/\e^2}$ parts
so that
$\snorm{W^H - W^H_\cQ}_{\square^2} \leq \e$, and $|t(F,W^H) -
t(F,W^H_\cQ)| \leq
e(F)\e$ by the counting lemma.

For each hypergraph in the sequence $H_1, H_2, \dots$, apply the weak
regularity
lemma (for a uniform $\e$) to obtain a partition $\cQ_n$ of the complete
graph
on $V(H_n)$ into $m$ graphs: $K_{V(H_n)} = G_{n,1} \cup \cdots \cup
G_{n,m}$,
where $m$ depends on $\e$ but not on $n$.

By applying Theorem~\ref{thm:LS-analytic} on the graph sequence
$(G_{n,1})_{n\geq 1}$,
we can find a graphon $\wt Y_1 \colon [0,1]^2 \to [0,1]$ so that $G_{n,1}$
converges to $\wt Y_1$ as $n \to \infty$ along some subsequence . By further
restricting to
subsequences, we can find a $\wt Y_j$ for each $1 \leq j \leq m$ so that
$G_{n,j}$ converges to $\wt Y_j$ as $n \to \infty$ along a subsequence.

For each $n$, $\{G_{n,1},\dots, G_{n,m}\}$ is a partition of $K_{V(H_n)}$,
so
the same holds for the resulting limit\footnote{Provided that the limits
  of the
  various graph sequences are taken in a compatible way. This is a
  source of
  technical/notational annoyance later on, and it is the reason for
  introducing
  branching partitions in \S\ref{sec:branch}.}, in the sense that $\wt
Y_1 +
\cdots + \wt Y_m = 1$ almost everywhere as functions $[0,1]^2 \to [0,1]$.
Next we build a partition $\wt Q$ of the cube $[0,1]^3 = [0,1]^{r[2]}$
(coordinates indexed by $x_1, x_2, x_{12}$) by stacking together subsets
whose
heights are given by $\wt Y_j$. More precisely, $\wt Q = \{\wt Q_1, \dots,
\wt
Q_m\}$ where
\[
\wt Q_j = \{(x_1,x_2,x_{12}) \in [0,1]^3 : (\wt Y_1 + \cdots + \wt
Y_{j-1})(x_1,x_2) \leq x_{12} < (\wt Y_1 + \cdots + \wt Y_{j})(x_1,x_2)\}.
\]
This is the first place where the ``extra'' coordinates such as $x_{12}$
arise
even though we started with hypergraphs not requiring these extra
coordinates.
They arise because the limit graphon $\wt Y_1$ of a sequence of graphs
$G_{n,1}$
is not always a $\{0,1\}$-valued function.

The partition $\wt \cQ$ of $[0,1]^{r[2]}$ induces a partition  $\wt \cQ^*$
of
$[0,1]^6 = [0,1]^{r_<[2]}$:
\[
(x_1,x_2,x_3,x_{12},x_{13},x_{23}) \sim_{\wt \cQ^*}
(x'_1,x'_2,x'_3,x'_{12},x'_{13},x'_{23})
\Leftrightarrow (x_i,x_j,x_{ij}) \sim_{\wt \cQ} (x'_i,x'_j,x'_{ij}) \
\forall 1
\leq i < j \leq 3.
\]
The partition $\wt \cQ^*$ should not be viewed as a regularization
partition for
any $H_n$ (indeed, the extra coordinates do not even appear in $H_n$).
Instead,
the partitions $\cQ_n$ themselves become increasing close to $\wt \cQ$.
There is
a correspondence of cells of $\cQ_n$ with those of $\wt \cQ$, and this
induces a
correspondence between cells of $\cQ^*_n$ with those of $\wt \cQ^*$.

Now we construct the first limiting hypergraphon $\wt U_1$ as a step
function
$[0,1]^6 \to [0,1]$ that is constant on each part of $\wt \cQ^*$. On each
part
of $\wt \cQ^*$, we assign to $\wt U_1$ the limiting value of the average of
$W_n$ on the corresponding cell of $\cQ^*_n$, limit taken as $n\to\infty$
along
a further restricted subsequence. We have constructed $\wt U_1$, which
plays a
similar role as $\wt U_1$ near the end of \S\ref{sec:graphs}.

However, unlike \S\ref{sec:graphs}, $\wt U_1$ is not close in
$\norm{\cdot}_{\square^2}$ to $H_n$ for large $n$. It is a limit in the
following sense: we first $\e$-regularized $H_n$, and then took the graph
limit
of the partitions, created a new partition of $[0,1]^6$ using these lower
order
limits, and then constructed a step-function $U_1$ using this limiting
partition
and the limiting values on the steps. We knew from the earlier counting
lemma
(referred to later on as \emph{Counting Lemma I}) that
\begin{equation} \label{eq:3-count1}
  \abs{t(F,H) - t(F,W^H_{\cQ_n})} \leq e(F)\e.
\end{equation}
By what we will call \emph{Counting Lemma II}, we have (here $n \to \infty$
along
a subsequence)
\begin{equation} \label{eq:3-count2}
  \lim_{n\to\infty} t(F,W^H_{\cQ_n}) = t(F,\wt U_1).
\end{equation}
Here is some intuition why \eqref{eq:3-count2} holds. Both $W^H_{\cQ_n}$
and
$\wt U_1$ are step functions. We can split them up into weighted sums of
indicator functions, on which the claim reduces to checking homomorphism
densities for the graphons corresponding to parts of the partitions
$\cQ_n$ and
$\wt \cQ$. We know that the graphs which are the parts of $\cQ_n$ converge
to
the graphons from which $\wt \cQ$ is built. So the graph homomorphism
densities
converge.

This shows that $\wt U_1$ is a $O(e(F)\e)$-approximation to a
subsequence of $H_n$ in terms of $F$-densities. Now, take a smaller
$\e' < \e$, and build another $\wt U_2$, where the new partitions
$\cQ_n$ are refinements of the previous ones. Continuing this process,
we obtain a sequence $\wt U_1, \wt U_2, \dots$ which is a martingale
as before. The Martingale Convergence Theorem gives a pointwise almost
everywhere limit $\wt W$ of $\wt U_s$, $s \to \infty$, and $\wt W$ is
the desired limit.

\medskip

In proving 3-uniform hypergraph limits, we used the existence of graph
limits.  In general, we prove the existence of $k$-uniform hypergraph
limits by induction on $k$. There are a few further technical
difficulties. For example, we need to make sure that the limit of a
sequence of partitions remains a partition, so the limit needs to be
taken in a compatible way. Since we are working with multiple
partitions, we will need to deal with homomorphisms from $F$ to a vector
of hypergraphons, where the edges of $F$ individually land in
different hypergraphons. The details are addressed in the rest of this
paper.

\section{Notation} \label{sec:notation}

One (not so trivial) source of difficulty in working with hypergraphs is
the
complexity of notation. This section collects some of the notation and
conventions used in the rest of this paper. Some notations were already
introduced in \S\ref{sec:intro}.

\medskip

We shall omit the word ``measurable'' as everything we consider is assumed
to be
measurable.

\subsection{Hypergraphs} A $k$-uniform hypergraph $F$ is some finite
collection
of $k$-element subsets of some ground set, which we denote by $V(F)$. So
when we
talk about an element of $F$, we mean an edge of $F$, and $\abs{F}$ means
the
number of edges of $F$.

\subsection{Subsets, partitions, and hypergraphons}

\begin{definition}[Symmetric sets and partitions]
  A \emph{symmetric (measurable) subset} of $[0,1]^{r[k]}$ is one which
  is
  invariant under the action of all permutations of $[k]$. A
  \emph{symmetric
    (measurable) partition} of $[0,1]^{r[k]}$ is a partition of
  $[0,1]^{r[k]}$ into
  a finite collection of symmetric subsets.
\end{definition}

A symmetric subset $P \subseteq
[0,1]^{r[k]}$ is associated to a $k$-hypergraphon $W^P \colon
[0,1]^{r_<[k]}\to[0,1]$ by
integrating out the top coordinate:
\begin{equation}
  \label{eq:W^P}
  W^P(\bx_{r_<[k]}) := \int_0^1 1_P(\bx_{r[k]}) \,dx_{[k]}.
\end{equation}
For example, for $k = 3$, we have $P \subseteq [0,1]^3$, with coordinates
indexed by $r[2] = \{1,2,12\}$, and
\[
W^P(x_1, x_2) = \int_0^1 1_P(x_1,x_2,x_{12}) \,dx_{12}.
\]
This operation collapses the final coordinate in $P$. It will be helpful to
think of $P$ and $W^P$ as representing the same object. For example, when
$k=2$
this means we do not care how $P$ is placed along the $x_{12}$ coordinate,
as we
only care about how much $P$ intersects line segments of the form $\{x_1\}
\x
\{x_2\} \x [0,1]$. And conversely, for given a $W \colon [0,1]^2 \to
[0,1]$,
there are many $P \subseteq [0,1]^2$ satisfying $W^P = W$, e.g., any set
of the
form $P = \{(x,y,z) : a(x,y) \leq z \leq b(x,y)\}$ where $b(x,y) - a(x,y) =
W(x,y)$.

\subsection{Homomorphism densities} \label{sec:not-hom}
For any tuple of $k$-uniform hypergraphons $\bW = (W_1, \dots, W_m)$,
any $k$-uniform hypergraph $F$, and any map $\alpha \colon F \to [m]$,
define
the homomorphism density
\[
t_\alpha(F, \bW) := \int_{[0,1]^{r(V(F),k-1)}} \prod_{e \in F}
W_{\alpha(e)}(\bx_{r_<(e)}) \,d\bx.
\]

\begin{example} If $k = 2$, $F = K_3 = \{12,13,23\}$, $\alpha = (12
  \mapsto 1,
  13 \mapsto 2, 23 \mapsto 3)$, then
  \[
  t_\alpha(F, \bW) = \int_{[0,1]^3} W_1(x_1,x_2)W_2(x_1,x_3)W_3(x_2,x_3)
  \,dx_1dx_2dx_3
  \]
\end{example}

For any symmetric partition $\cP = (P_1, \dots, P_m)$ of
$[0,1]^{r[k]}$, define
\begin{equation} \label{eq:W^part}
  \bW^\cP := (W^{P_1}, \dots, W^{P_m})
  \qquad\text{and}\qquad
  t_\alpha(F, \cP) := t_\alpha(F, \bW^\cP).
\end{equation}

\subsection{Quotient and stepping operators} \label{sec:quotient}
Let $W \colon [0,1]^{r_<[k]} \to [0,1]$ be a $k$-uniform hypergraphon and
$\cQ$
a symmetric partition of $[0,1]^{r[k-1]}$ into $q$ parts $Q_1, Q_2, \dots,
Q_q
\subseteq [0,1]^{r[k-1]}$. The \emph{quotient} $W / \cQ$ is a $2q^k$-tuple
of
numbers in $[0,1]$ defined by assigning to each $k$-tuple $f = (f_1,
\dots, f_k)
\in [q]^k$ a pair $(v_f, w_f)$, referred to as (volume, average), as
follows:
\begin{itemize}
\item Volume: $v_f$ equals the integral
  \begin{equation} \label{eq:vol}
    v_f := \int_{\bx \in [0,1]^{r_<[k]}}
    1_{Q_{f_1}}(\bx_{r([k]\setminus\{1\})})
    1_{Q_{f_2}}(\bx_{r([k]\setminus\{2\})}) \cdots
    1_{Q_{f_k}}(\bx_{r([k]\setminus\{k\})}) \,d\bx.
  \end{equation}
\item Average: If $v_f = 0$, then we set $w_f = 0$. Otherwise, $w_f$
  is defined
  to be
  \begin{equation} \label{eq:avg}
    w_f := \frac{1}{v_f} \int_{\bx \in [0,1]^{r_<[k]}} W(\bx_{r_<[k]})
    1_{Q_{f_1}}(\bx_{r([k]\setminus\{1\})})
    1_{Q_{f_2}}(\bx_{r([k]\setminus\{2\})})
    \cdots 1_{Q_{f_k}}(\bx_{r([k]\setminus\{k\})}) \,d\bx.
  \end{equation}
\end{itemize}
Intuitively, the partition $\cQ$ induces a partition $\cQ^*$ of
$[0,1]^{r[k]}$
into parts enumerated by $f \in [q]^k$. Each cell of $\cQ^*$ has a volume
$v_f$
and an average value $w_f$ of $W$ on the cell.

If we have another $k$-uniform hypergraphon $W'$, and a symmetric partition
$\cQ'$ of $[0,1]^{r[k-1]}$ into $q$ parts ($\cQ$ and $\cQ'$ have the same
number
of parts) with volumes and weights $(v'_f, w'_f)$, we define
\begin{equation} \label{eq:quotient-d1}
  d_1(W/\cQ, W'/\cQ') := \sum_{f \in [q]^k} (|v_f - v'_f| + |v_f w_f - v'_f
  w'_f|).
\end{equation}
For any symmetric subset $P \subseteq [0,1]^{r[k]}$, we write
\[
P / \cQ := W^P / \cQ.
\]

A \emph{$\cQ$-step function} $U \colon [0,1]^{r_<[k]} \to \RR$ is a
function of
the form
\begin{equation}\label{eq:Q-step}
  U(\bx) = \sum_{f = (f_1, \dots, f_k) \in [q]^k} u_f
  1_{Q_{f_1}}(\bx_{r([k]\setminus\{1\})})
  1_{Q_{f_2}}(\bx_{r([k]\setminus\{2\})})
  \cdots 1_{Q_{f_k}}(\bx_{r([k]\setminus\{k\})})
\end{equation}
for some real values $u_f$. Since $\cQ$ is a partition, the indicator
functions in \eqref{eq:Q-step} all have disjoint support, which
together partition the domain $[0,1]^{r_<[k]}$. Usually $U$ is a
symmetric function, which is equivalent to having an additional
symmetry constraint on $u_f$, namely that $u_f = u_{f'}$ whenever $f'$
is obtained from $f'$ by a permutation of the coordinates.

The \emph{$\cQ$-stepping operator}, denoted by a subscript $\cQ$,
turns a $k$-uniform hypergraphon $W$ into a symmetric $\cQ$-step
function $W_\cQ$ by averaging over each induced cell of $\cQ^*$. More
precisely, we define $W_\cQ \colon [0,1]^{r_<[k]} \to [0,1]$ to be
(using $v_f$ and $w_f$ from $W/\cQ$ defined earlier)
\[
W_\cQ(\bx) := \sum_{f = (f_1, \dots, f_k) \in [q]^k} w_f
1_{Q_{f_1}}(\bx_{r([k]\setminus\{1\})})
1_{Q_{f_2}}(\bx_{r([k]\setminus\{2\})})
\cdots 1_{Q_{f_k}}(\bx_{r([k]\setminus\{k\})})
\]

We can also apply the stepping operator to a tuple of hypergraphons. If
$\bW =
(W_1, \dots, W_m)$, then
\[
\bW_\cQ := ((W_1)_\cQ, \dots, (W_m)_\cQ).
\]
In particular, if $\cP = \{P_1, \dots, P_m\}$ is a partition of
$[0,1]^{r[k]}$,
then we write
\[
\bW^\cP_\cQ := ((W^{P_1})_\cQ, \dots, (W^{P_m})_\cQ) = (W^{P_1}_\cQ, \dots,
W^{P_m}_\cQ)
\]

\subsection{Cut norm}

\begin{definition} \label{def:k-cut}
  For any symmetric function $W \colon [0,1]^{r_<[k]} \to
  \RR$, define
  \begin{equation} \label{eq:k-cut}
    \norm{W}_{\square^{k-1}}
    :=  \sup_{\substack{u_1, \dots, u_k \colon [0,1]^{r[k-1]} \to
        [0,1] \\ \text{symmetric}}}
    \abs{\int_{[0,1]^{r_<[k]}} W(\bx_{r_<[k]}) \prod_{i=1}^k
      u_i(\bx_{r([k]\setminus\set{i})}) \,d\bx}.
  \end{equation}
\end{definition}
Note that by linearity of the expression inside the absolute value in
\eqref{eq:k-cut}, it suffices to consider functions $u_i$'s which are
indicator
functions $1_{B_i}$ of symmetric subsets $B_i \subseteq [0,1]^{r[k-1]}$.
The
usual cut norm corresponds to the case $k=2$. The following example shows
$k=3$.

\begin{example}
  \label{ex:3-cut}
  For any symmetric function $W \colon [0,1]^{r_<[3]} \to \RR$,
  $\norm{W}_{\square^2}$ equals to
  % \begin{multline*}
  %   \norm{W}_{\square^2}
  %   = \sup_{u_1,u_2,u_3} \Bigg\vert \int_{[0,1]^6}
  % W(x_1,x_2,x_3,x_{12},x_{13},x_{23}) u_1(x_2,x_3,x_{23})
  % u_2(x_1,x_3,x_{13}) u_3(x_1,x_2,x_{12}) \\
  % dx_1dx_2dx_3dx_{12}dx_{13}dx_{23} \Bigg\vert
  % \end{multline*}
  \[
  \sup_{u_1,u_2,u_3} \abs{\int_{[0,1]^6}
    W(x_1,x_2,x_3,x_{12},x_{13},x_{23}) u_1(x_2,x_3,x_{23})
    u_2(x_1,x_3,x_{13}) u_3(x_1,x_2,x_{12}) \,dx_1dx_2dx_3dx_{12}dx_{13}dx_{23}}
  \]
  where $u_1,u_2,u_3$ vary over all symmetric functions
  $[0,1]^{r[2]} \to [0,1]$.
\end{example}

\section{Regularity and counting lemmas} \label{sec:reg-count}

\begin{definition} \label{def:W-reg}
  Let $W$ be a $k$-uniform hypergraphon and $\cQ$ a
  symmetric partition of $[0,1]^{r[k-1]}$. We say that $(W, \cQ)$ is
  \emph{weakly $\e$-regular} if $\norm{W - W_\cQ}_{\square^{k-1}} \leq
  \e$.

  For a symmetric subset $P \subseteq [0,1]^{r[k]}$, we say that $(P,
  \cQ)$ is
  weakly $\e$-regular if $(W^P, \cQ)$ is.
\end{definition}

\begin{lemma}[Weak regularity lemma] \label{lem:weak-reg}
  Let $k \geq 2$ and $\e > 0$. Let $\bW = (W_1, \dots, W_m)$ be a tuple
  of
  $k$-uniform
  hypergraphons.
  Let $\cQ$ be a
  symmetric partition of $[0,1]^{r[k-1]}$. Then there exists a
  partition $\cQ'$ refining $\cQ$ so that every part of $\cQ$ is refined
  into exactly $\lceil 2^{km/\e^2} \rceil$ parts (allowing empty
  parts) so that $(W_i, \cQ')$ is weakly $\e$-regular for every $1 \leq
  i \leq m$.
\end{lemma}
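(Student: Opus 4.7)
The plan is to run a standard Frieze--Kannan style $L^2$-energy increment argument, generalized to the hypergraph setting. Define the energy of a symmetric partition $\cP$ of $[0,1]^{r[k-1]}$ by
\[
E(\cP) := \sum_{i=1}^m \norm{(W_i)_\cP}_2^2.
\]
Since each $W_i$ takes values in $[0,1]$, $0 \leq E(\cP) \leq m$. Viewing $(W_i)_\cP$ as the orthogonal $L^2$-projection of $W_i$ onto the subspace of step functions with respect to the induced partition $\cP^*$ of $[0,1]^{r_<[k]}$ from \S\ref{sec:quotient}, the Pythagorean identity gives $E(\cP') \geq E(\cP)$ whenever $\cP'$ refines $\cP$, with the gap equal to $\sum_i \norm{(W_i)_{\cP'} - (W_i)_\cP}_2^2$.

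Iterate as follows. Set $\cQ_0 := \cQ$. Given $\cQ_t$, if $(W_i,\cQ_t)$ is weakly $\e$-regular for every $i$, stop. Otherwise pick an $i$ witnessing failure; by multilinearity of the integrand in the $u_j$'s, the supremum defining $\norm{\cdot}_{\square^{k-1}}$ may be restricted (up to arbitrarily small loss) to symmetric indicator functions, so there exist symmetric measurable sets $B_1,\dots,B_k\subseteq [0,1]^{r[k-1]}$ with
\[
\abs{\int_{[0,1]^{r_<[k]}} (W_i - (W_i)_{\cQ_t})(\bx) \prod_{j=1}^k 1_{B_j}(\bx_{r([k]\setminus\{j\})})\, d\bx} > \e.
\]
Let $\cQ_{t+1}$ be the common refinement of $\cQ_t$ with $\{B_1,B_1^c\},\dots,\{B_k,B_k^c\}$; this refinement is symmetric since each $B_j$ is, and each part of $\cQ_t$ is split into at most $2^k$ pieces.

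The witness function $\bx \mapsto \prod_j 1_{B_j}(\bx_{r([k]\setminus\{j\})})$ is a $\cQ_{t+1}$-step function, hence $L^2$-orthogonal to $W_i - (W_i)_{\cQ_{t+1}}$; so the displayed integral equals $\int ((W_i)_{\cQ_{t+1}} - (W_i)_{\cQ_t})\prod_j 1_{B_j}\, d\bx$, which by Cauchy--Schwarz is at most $\norm{(W_i)_{\cQ_{t+1}} - (W_i)_{\cQ_t}}_2$. Thus that $L^2$ gap exceeds $\e$, and Pythagoras yields $E(\cQ_{t+1}) \geq E(\cQ_t) + \e^2$. Since $E \leq m$, the loop halts after $T \leq m/\e^2$ iterations; each iteration multiplies the number of sub-parts within each original cell of $\cQ$ by at most $2^k$, producing at most $2^{kT}\leq 2^{km/\e^2}$ sub-parts per original part. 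Finally, pad with empty sets so that every part of $\cQ$ is divided into exactly $\lceil 2^{km/\e^2}\rceil$ parts.

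The main obstacle, apart from notational bookkeeping, is cleanly identifying the $\cQ$-stepping operator with the $L^2$-orthogonal projection onto the natural $\cQ^*$-step function subspace, so that Pythagoras and the monotonicity $E(\cP')\geq E(\cP)$ under refinement are both legitimate, and checking that the refinements we construct remain symmetric partitions of $[0,1]^{r[k-1]}$. Both points ultimately rely on the induced partition $\cP^*$ of $[0,1]^{r_<[k]}$ being well defined from $\cP$ and on the cut-norm witnesses $B_j$ being themselves symmetric subsets, both of which are built into the definitions of \S\ref{sec:notation}.
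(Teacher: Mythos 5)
Your proposal is correct and follows essentially the same route as the paper: an $L^2$-energy increment argument in which one refines by the symmetric cut-norm witnesses $B_1,\dots,B_k$, uses the fact that the stepping operator is the orthogonal projection onto step functions (the paper states this as the inner-product identity $\ang{U,U'}=\ang{U_\cQ,U'}$ for step functions $U'$) together with Cauchy--Schwarz and Pythagoras to gain $\e^2$ in energy per iteration, and stops after at most $m/\e^2$ rounds, each splitting every cell into at most $2^k$ parts before padding with empty parts.
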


\begin{proof}
  We build the partition incrementally, starting with $\cQ$. At
  a given stage, suppose the partition is $\cR$. If $(W_i, \cR)$ is
  weakly
  $\e$-regular for every $i$ then we stop. Otherwise
  there is some $i$ with $\norm{W_i - (W_i)_\cR}_{\square^{k-1}} > \e$, so there
  exists
  symmetric subsets $B_1, \dots,
  B_k \subseteq [0,1]^{r([k-1])}$ such that
  \begin{equation}\label{eq:cut-dev}
    \abs{\int_{[0,1]^{r_<[k]}} (W_i - (W_i)_\cR)(\bx_{r_<([k])})
      \prod_{i=1}^k
      1_{B_i}(\bx_{r([k]\setminus\set{i})}) \,d\bx} > \e.
  \end{equation}
  Let $B \colon [0,1]^{r_<[k]} \to [0,1]$ be the function (not
  necessarily
  symmetric)
  \[
  B(\bx) := \prod_{i=1}^k 1_{B_i}(\bx_{r([k]\setminus\set{i})}) \,d\bx.
  \]
  For two functions $U, U' \colon [0,1]^{r_<[k]} \to [0,1]$, define the
  inner
  product
  \[
  \ang{U,U'} = \int_{[0,1]^{r_<[k]}} U(\bx) U'(\bx) \,d\bx.
  \]

  We will use the following easy fact: if $U'$ is a $\cQ$-step function,
  then
  $\ang{U, U'} = \ang{U_\cQ, U'}$.

  Now let $\cR'$ be the
  the minimal partition refining $\cR$ and $B_1, \dots, B_k$. Since
  $((W_i)_{\cR'})_{\cR} = (W_i)_\cR$, applying the fact above, we obtain
  \begin{equation} \label{eq:W-R'-R}
    \ang{(W_i)_{\cR'}, (W_i)_{\cR}} = \ang{(W_i)_{\cR}, (W_i)_{\cR}}
  \end{equation}
  Since $B$ is an $\cR'$-step function, we have $\ang{(W_i)_{\cR'}, B} =
  \ang{W_i,B}$. So by \eqref{eq:cut-dev}
  \begin{equation} \label{eq:W-R'-R-B}
    \abs{\ang{(W_i)_{\cR'} - (W_i)_{\cR}, B}} = \abs{\ang{W_i -
        (W_i)_{\cR}, B}} >
    \e.
  \end{equation}
  Since $\norm{U}_2^2 = \ang{U,U}$ for any $U$, we obtain by
  \eqref{eq:W-R'-R},
  the Cauchy-Schwarz inequality, and \eqref{eq:W-R'-R-B}
  \begin{equation} \label{eq:L2-inc}
    \norm{(W_i)_{\cR'}}_2^2 - \norm{(W_i)_{\cR}}_2^2=  \norm{(W_i)_{\cR'} -
      (W_i)_{\cR}}_2^2
    \geq \abs{\ang{(W_i)_{\cR'} - (W_i)_{\cR}, B}}^2 > \e^2.
  \end{equation}
  Furthermore, for every $1 \leq j \leq m$, $\norm{(W_j)_{\cR'}}_2^2 \geq
  \norm{(W_j)_{\cR}}_2^2$ by convexity since $((W_j)_{\cR'})_{\cR} =
  (W_j)_\cR$.

  The quantity $\norm{(W_1)_\cR}_2^2 + \cdots + \norm{(W_m)_\cR}_2^2$ is
  at most
  $m$, and each iteration above increases the sum by at least $\e^2$. So
  there can
  be
  at most $m/\e^2$ iterations. At the end we obtain a partition $\cQ'$
  so that
  $(W_i, \cQ')$ is weakly $\e$-regular for every $1 \leq i \leq m$. Each
  time we
  introduced at most $k$ new sets to refine
  the partition, so $\cR'$ refines each part of $\cR$ into at most $2^k$
  subparts. After at most $m/\e^2$ iterations,
  each part of the original partition $\cQ$ is refined into at most
  $2^{km/\e^2}$ parts. We can throw in some empty parts so that each
  part of $\cQ$
  is refined into exactly $\lceil 2^{km/\e^2} \rceil$ parts.
\end{proof}

\begin{lemma}[Counting lemma I] \label{lem:count1}
  Let $\bU = (U_1, \dots, U_m)$ and $\bW = (W_1, \dots, W_m)$ be two
  $m$-tuple of
  $k$-uniform hypergraphons and $\cQ$ a
  symmetric partition of $[0,1]^{r([k-1])}$. Suppose that $\norm{W_i -
    U_i}_{\square^{k-1}} \leq \e$ for each $i$. Then for any
  $k$-uniform
  hypergraph $F$ and any
  map $\alpha \colon F \to [m]$, we have
  \[
  \abs{t_\alpha(F, \bU) - t_\alpha(F, \bW)} \leq \abs{F} \e.
  \]
\end{lemma}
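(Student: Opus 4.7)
The plan is a telescoping argument, generalizing the proof of Lemma~\ref{lem:2-count} sketched in Section~\ref{sec:graphs}. Enumerate the edges of $F$ as $e_1, \ldots, e_N$ with $N = \abs{F}$ and define the interpolants
\[
I_j := \int_{[0,1]^{r(V(F), k-1)}} \prod_{i \leq j} U_{\alpha(e_i)}(\bx_{r_<(e_i)}) \prod_{i > j} W_{\alpha(e_i)}(\bx_{r_<(e_i)}) \, d\bx,
\]
so that $I_0 = t_\alpha(F, \bW)$ and $I_N = t_\alpha(F, \bU)$. By the triangle inequality it suffices to show $\abs{I_j - I_{j-1}} \leq \e$ for each $j$, as then $\abs{t_\alpha(F,\bU) - t_\alpha(F,\bW)} \leq N\e = \abs{F}\e$.

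Expanding the telescoping step gives
\[
I_j - I_{j-1} = \int (U_{\alpha(e_j)} - W_{\alpha(e_j)})(\bx_{r_<(e_j)}) \cdot \prod_{i \neq j} Y_i(\bx_{r_<(e_i)}) \, d\bx,
\]
where each $Y_i$ is either $U_{\alpha(e_i)}$ or $W_{\alpha(e_i)}$. I would split the integration variables into \emph{internal} coordinates $\bx_{r_<(e_j)}$ and \emph{external} coordinates (those indexed by subsets of $V(F)$ that meet $V(F) \setminus e_j$) and integrate the externals last, fixing them temporarily. Since $\abs{e_i} = \abs{e_j} = k$ and $e_i \neq e_j$ force $e_j \setminus e_i \neq \emptyset$, I would choose some $v(e_i) \in e_j \setminus e_i$ for each $i \neq j$, and set
\[
u_v(\bx_{r(e_j \setminus \{v\})}) := \prod_{i : v(e_i) = v} Y_i(\bx_{r_<(e_i)})
\]
with external coordinates held fixed. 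Because $r_<(e_i) \cap r(e_j) = r(e_i \cap e_j) \subseteq r(e_j \setminus \{v(e_i)\})$ (using $v(e_i) \notin e_i$), each $u_v$ is a well-defined $[0,1]$-valued function of $\bx_{r(e_j \setminus \{v\})}$. The inner integral then matches the template of Definition~\ref{def:k-cut}, so the cut norm bound yields an upper bound of $\norm{U_{\alpha(e_j)} - W_{\alpha(e_j)}}_{\square^{k-1}} \leq \e$ for each configuration of external coordinates; integrating over the externals (total measure one) preserves this bound.

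The main technical nuisance is that the test functions $u_v$ produced this way need not be symmetric under permutations of $e_j \setminus \{v\}$, whereas Definition~\ref{def:k-cut} restricts to symmetric test functions. I would resolve this by observing that for symmetric $V \colon [0,1]^{r_<[k]} \to \RR$ the supremum defining $\norm{V}_{\square^{k-1}}$ is unchanged if arbitrary $[0,1]$-valued test tuples are allowed: symmetry of $V$ lets one symmetrize any given tuple via the $S_k$-action on coordinates without decreasing the absolute value of the integral, paralleling the fact (already implicit in Section~\ref{sec:graphs}) that for $k=2$ the symmetric and non-symmetric cut norms coincide. This is the only delicate point; once recorded, each telescoping step is bounded by $\e$ and the lemma follows.
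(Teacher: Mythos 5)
Your telescoping argument, the choice of a vertex $v(e_i)\in e_j\setminus e_i$ for each other edge, and the bound of each telescoping term by the cut norm after freezing the external coordinates is exactly the paper's proof, so up to that point you are on track. The one place where you go beyond the paper is also where the gap lies: you correctly observe that the test functions $u_v$ obtained this way are not symmetric (indeed, even a single factor $Y_i$ with its external arguments frozen at generic values is no longer invariant under permutations of $e_i\cap e_j$), but your proposed fix --- that for symmetric $V$ the supremum in Definition~\ref{def:k-cut} is unchanged if arbitrary $[0,1]$-valued test tuples are allowed, ``by symmetrizing via the $S_k$-action'' --- is not established by the argument you sketch. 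The functional is multilinear in $(u_1,\dots,u_k)$: replacing each $u_i$ by its average over permutations of its own arguments expresses the new value as an average of values at \emph{mixed} tuples $(u_1^{\pi_1},\dots,u_k^{\pi_k})$, and the change of variables coming from the symmetry of $V$ only identifies the diagonal terms; the cross terms are uncontrolled and can cancel, so the absolute value may well decrease under symmetrization. The appeal to the $k=2$ case gives no support, because for $k=2$ the test functions are univariate and the symmetry constraint is vacuous --- the constraint first has content at $k\geq 3$. For comparison, the analogous identification for the graph cut norm (general rectangles $S\times T$ versus symmetric test sets $S\times S$ for symmetric $W$) holds only up to a constant factor, not exactly; if the same were true here you would only get $\abs{t_\alpha(F,\bU)-t_\alpha(F,\bW)}\leq c_k\abs{F}\e$, which is weaker than the stated bound (though still enough for the paper's later applications, where only a quantity tending to $0$ is needed).

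So the delicate point you flag is real, but it is not closed by your write-up; it needs either an honest proof that $\snorm{\cdot}_{\square^{k-1}}$ with symmetric test functions controls the same expression with arbitrary test functions (exactly, or at least up to a constant), or a change of framework: define the cut norm with arbitrary, not necessarily symmetric, test functions. The latter is the robust repair --- the energy-increment proof of the weak regularity lemma (Lemma~\ref{lem:weak-reg}) goes through verbatim if one refines by all $k!$ permuted copies of the witness sets $B_1,\dots,B_k$ (so that the partition stays symmetric, at the cost of replacing $2^k$ by $2^{k\cdot k!}$ in the part count), and then your telescoping bound is immediate with no symmetrization needed. It is worth noting that the paper's own proof of Lemma~\ref{lem:count1} is the same terse telescoping argument and is silent on this very point, so you have in effect located a genuine subtlety in the source; but as written your resolution of it does not stand.
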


\begin{proof}
  Let $V = V(F)$ and $F = \{e_1, \dots,
  e_{\abs{F}}\}$. Write as a telescoping sum
  \begin{align*}
    &t_\alpha(F,\bU) - t_\alpha(F,\bW)\\
    &=
    \int_{[0,1]^{r(V,k-1)}} \paren{\prod_{i=1}^{\abs{F}}
      U_{\alpha(e_i)}(\bx_{r_<(e_i)}) - \prod_{i=1}^{\abs{F}}
      W_{\alpha(e_i)}(\bx_{r_<(e_i)})} \,d\bx_{r(V,k-1)}
    \\
    &= \sum_{j=1}^{\abs{F}}  \int_{[0,1]^{r(V,k-1)}}
    \paren{\prod_{i=1}^{j-1} U_{\alpha(e_i)}(\bx_{r_<(e_i)})}
    (U_{\alpha(e_j)}- W_{\alpha(e_j)})(\bx_{r_<(e_j)})
    \paren{\prod_{i=j+1}^{\abs{F}} W_{\alpha(e_i)}(\bx_{r_<(e_i)})} \
    d\bx.
  \end{align*}
  The $j$-th term in the final sum is bounded by $\snorm{U_{\alpha(e_j)}
    -
    W_{\alpha(e_j)}}_{\square^{k-1}} \leq \e$. Indeed, if we fix all
  variables other than $\bx_{r_<(e_j)}$, then all the factors except for
  $(U_{\alpha(e_j)}- W_{\alpha(e_j)})(\bx_{r_<(e_j)}) $
  have the form $u(\bx_{r(f)})$ for some $f \subsetneq e_j$, where $f$
  is the intersection of $e_j$ with another edge $e_{j'}$. So the
  the integral can be bounded by the $(k-1)$-cut norm, as claimed.
\end{proof}

\begin{lemma}[Counting lemma II] \label{lem:count2}
  Let $\bU = (U_1, \dots, U_m)$ and $\bW = (W_1, \dots, W_m)$ be two
  $m$-tuples of
  $k$-uniform hypergraphons. Let
  $\cQ = \{Q_1, \dots, Q_q\}$ and $\cR = \{R_1, \dots, R_q\}$ be
  symmetric partitions of $[0,1]^{r[k-1]}$. Suppose that $d_1(U_i/\cQ,
  W_i/\cR)
  \leq \delta$ for each $i$. Then for any $k$-uniform hypergraph $F$ and
  any
  map $\alpha \colon F \to [m]$,
  \[
  \abs{t_\alpha(F, \bU_\cQ) - t_\alpha(F, \bW_{\cR})} \leq \abs{F}\delta
  +
  \sum_{\beta \colon \del F \to
    [q]} \abs{t_\beta(\del F, \cQ) - t_\beta(\del F, \cR)},
  \]
  where the sum is taken over all maps $\beta \colon \del F \to [q]$,
  and $\del F$
  is the
  $(k-1)$-uniform hypergraph on $V(F)$ consisting of $(k-1)$-element
  subsets of
  $V(F)$
  that are contained in some edge of $F$.
\end{lemma}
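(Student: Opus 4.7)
The plan is to expand both $t_\alpha(F,\bU_\cQ)$ and $t_\alpha(F,\bW_\cR)$ into finite sums indexed by maps $\beta\colon \del F \to [q]$, then compare them via a two-stage add-and-subtract. For any $k$-uniform hypergraphon $U$ and symmetric partition $\cQ = \{Q_1,\dots,Q_q\}$, the step function $U_\cQ$ restricted to an edge $e$ of $F$ unfolds as a sum over labelings $\gamma\colon \binom{e}{k-1} \to [q]$ of the quotient averages $w_\gamma$ (from $U/\cQ$) times products $\prod_{S \in \binom{e}{k-1}} 1_{Q_{\gamma(S)}}(\bx_{r(S)})$; symmetry of $\cQ$ and $U$ makes this indexing well-defined after identifying $e$ with $[k]$ by some bijection. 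Taking the product over $e \in F$, the partition property $1_{Q_i}1_{Q_j} = 0$ for $i \neq j$ forces the local labelings $\gamma^e$ to agree on shared $(k-1)$-subsets, collapsing them to a single global $\beta\colon \del F \to [q]$, and idempotence of indicators erases repetition. Integrating yields
\[
t_\alpha(F,\bU_\cQ) = \sum_{\beta\colon \del F \to [q]} \paren{\prod_{e \in F} w^{(\alpha(e))}_{\beta|_e}(\cQ)}\, t_\beta(\del F, \cQ),
\]
with the analogous identity for $t_\alpha(F,\bW_\cR)$ obtained by replacing each $w^{(i)}_\gamma(\cQ)$ by the corresponding ${w'}^{(i)}_\gamma(\cR)$ from $W_i/\cR$ and $t_\beta(\del F,\cQ)$ by $t_\beta(\del F,\cR)$.

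To compare, add and subtract the hybrid term $\sum_\beta \paren{\prod_e w^{(\alpha(e))}_{\beta|_e}(\cQ)}\, t_\beta(\del F, \cR)$ to split $t_\alpha(F,\bU_\cQ) - t_\alpha(F,\bW_\cR) = A + B$, where
\[
A = \sum_\beta \paren{\prod_e w^{(\alpha(e))}_{\beta|_e}(\cQ)} \paren{t_\beta(\del F,\cQ) - t_\beta(\del F,\cR)}
\]
and
\[
B = \sum_\beta t_\beta(\del F,\cR) \paren{\prod_e w^{(\alpha(e))}_{\beta|_e}(\cQ) - \prod_e {w'}^{(\alpha(e))}_{\beta|_e}(\cR)}.
\]
Since $w, w' \in [0,1]$ and $t_\beta(\del F,\cR) \geq 0$, the triangle inequality immediately gives $\abs{A} \leq \sum_\beta \abs{t_\beta(\del F,\cQ) - t_\beta(\del F,\cR)}$, accounting for the second summand of the target bound.

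For $B$, enumerate $F = \{e_1,\dots,e_{\abs{F}}\}$ and telescope the difference of products so that the $j$th term isolates the single factor $w^{(\alpha(e_j))}_{\beta|_{e_j}}(\cQ) - {w'}^{(\alpha(e_j))}_{\beta|_{e_j}}(\cR)$ while all other weight factors are bounded by $1$. Regrouping by $f := \beta|_{e_j}$ and invoking the combinatorial identity
\[
\sum_{\beta\colon \beta|_{e_j} = f} t_\beta(\del F,\cR) = v'_f(\cR)
\]
(which follows by factoring the integrand of $t_\beta$ into pieces indexed by $\binom{e_j}{k-1}$ versus $\del F \setminus \binom{e_j}{k-1}$ and using $\sum_b 1_{R_b} = 1$ on the latter) reduces the $j$th summand to $\sum_f v'_f(\cR)\,\abs{w^{(\alpha(e_j))}_f(\cQ) - {w'}^{(\alpha(e_j))}_f(\cR)}$. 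A second add-and-subtract of $v_f(\cQ) w^{(\alpha(e_j))}_f(\cQ)$ inside the absolute value bounds each summand by $\abs{v_f(\cQ) - v'_f(\cR)} + \abs{v_f(\cQ) w^{(\alpha(e_j))}_f(\cQ) - v'_f(\cR) {w'}^{(\alpha(e_j))}_f(\cR)}$, whose sum over $f$ is exactly $d_1(U_{\alpha(e_j)}/\cQ, W_{\alpha(e_j)}/\cR) \leq \delta$. Summing the $\abs{F}$ telescoped pieces yields $\abs{B} \leq \abs{F}\delta$.

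The main obstacle is the notational bookkeeping: one must pin down the translation between the quotient indexing $f \in [q]^k$ and labelings $\gamma\colon \binom{e}{k-1}\to[q]$ via complementation inside an ordering of $e$, and check that everything is well-defined using the symmetry of $\cQ$ and of the hypergraphons. Once that is set up, both the decomposition formula and the summation identity are routine partition-indicator manipulations, and the two-stage telescope mechanically delivers the bound.
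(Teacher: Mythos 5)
Your proof is correct and is essentially the paper's argument: both expand the step functions over labelings $\beta \colon \del F \to [q]$ and compare via an add-and-subtract hybrid that mixes one partition's cell averages with the other's lower-order densities, bounding one piece by $\sum_\beta \abs{t_\beta(\del F,\cQ)-t_\beta(\del F,\cR)}$ using $w,w'\in[0,1]$ and the other by $\abs{F}\delta$ via the $d_1$ hypothesis. The only difference is bookkeeping: you take the hybrid with $\cQ$-averages against $\cR$-densities and hence need the telescoping of weight products together with the volume identity $\sum_{\beta:\beta|_{e_j}=f} t_\beta(\del F,\cR)=v'_f$, whereas the paper instead modifies each $U_i$ into a $\cQ$-step function $U'_i$ carrying the $\cR$-averages and gets the $\abs{F}\delta$ term from the $L^1$ bound $\norm{U_i-U'_i}_1\leq\delta$; both reductions rest on the same elementary estimate against $d_1$.
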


\begin{proof}
  We can replace each $U_i$ by $(U_i)_\cQ$ as this does not change
  $U_i/\cQ$ or
  $t_\alpha(F, \bU_\cQ)$. So we may assume that every $U_i$ is a
  symmetric
  $\cQ$-step function, i.e., $\bU_\cR = \bU$. Similarly, assume that every
  $W_i$
  is a symmetric $\cR$-step function.

  For each $f \in [q]^k$, let $(v_{i,f}, w_{i,f})$ denote the volume and
  average
  corresponding to $f$ in $U_i/\cQ$, and let $(v'_{i,f}, w'_{i,f})$
  denote the
  same for $W_i/\cR$.

  For each $1 \leq i \leq m$, construct a symmetric $\cQ$-step function
  $U'_i$
  from $U_i$ by changing its value on the step corresponding to $f$
  from
  $w_{i,f}$ to $w'_{i,f}$. So $U'_i/\cQ$ has $(v_{i,f}, w'_{i,f})$ as
  its volumes
  and averages. In other words,
  \begin{align}
    U_i(\bx_{r_<[k]}) &= \sum_{f = (f_1, \dots, f_q) \in [q]^k} w_{i,f}
    1_{Q_{f_1}}(\bx_{r([k]\setminus\{1\})}) \cdots
    1_{Q_{f_k}}(\bx_{r([k]\setminus\{k\})}); \label{eq:count2-U} \\
    U'_i(\bx_{r_<[k]}) &= \sum_{f = (f_1, \dots, f_q)\in [q]^k}
    w'_{i,f}
    1_{Q_{f_1}}(\bx_{r([k]\setminus\{1\})}) \cdots
    1_{Q_{f_k}}(\bx_{r([k]\setminus\{k\})}); \label{eq:count2-U'} \\
    W_i(\bx_{r_<[k]}) &= \sum_{f = (f_1, \dots, f_q)\in [q]^k} w'_{i,f}
    1_{R_{f_1}}(\bx_{r([k]\setminus\{1\})}) \cdots
    1_{R_{f_k}}(\bx_{r([k]\setminus\{k\})}). \label{eq:count2-R}
  \end{align}
  Write $\bU' = (U'_1, \dots, U'_m)$. We have
  \begin{align*}
    \norm{U_i - U'_i}_1 &= \sum_{f \in [q]^k} v_{i,f}|w_{i,f} -
    w'_{i,f}|
    \leq \sum_{f \in [q]^k} (|v_{i,f}w_{i,f} -
    v'_{i,f}w'_{i,f}| + w'_{i,f}|v'_{i,f} - v_{i,f}|) \\
    &\leq \sum_{f \in [q]^k} (|v_{i,f}w_{i,f} -
    v'_{i,f}w'_{i,f}| + |v'_{i,f} - v_{i,f}|)
    = d_1(U_i/\cQ, W_i/\cR) \leq \delta.
  \end{align*}
  So $\norm{U_i - U'_i}_1 \leq \delta$ for each $i$. It follows that
  \begin{equation} \label{eq:count2-U-U'}
    \abs{t_\alpha(F, \bU) - t_\alpha(F, \bU')} \leq \abs{F}\delta.
  \end{equation}
  (This follows from Counting Lemma I, but it's in fact even easier.)
  From \eqref{eq:count2-U'} we have
  \begin{align}
    t_\alpha(F, \bU') &= \int_{[0,1]^{r(V(F),k-1)}} \prod_{e = \{j_1,
      \dots, j_k\}
      \in F} \paren{\sum_{f = (f_1, \dots, f_q) \in [q]^k}
      w'_{\alpha(e),f}
      1_{Q_{f_1}}(\bx_{r(e\setminus\{j_1\})}) \cdots
      1_{Q_{f_k}}(\bx_{r(e\setminus\{j_k\})}) } \,d\bx \nonumber
    \\
    &= \sum_{\beta \colon \del F \to [q]} \paren{\prod_{e \in F}
      w'_{\alpha(e),\beta(\del e)}} t(\del F, \cQ).
    \label{eq:count2-U'-expand}
  \end{align}
  Here $\beta(\del e) = (\beta(e \setminus \{j_1\}), \dots,
  \beta(e\setminus\{j_k\})) \in [q]^k$ when $e = \{j_1, \dots,
  j_k\}$. The last equality above needs some pondering. Essentially we
  expand the product of sums in the previous line and note that since
  $\cQ$ is a partition, the nonzero terms in the expansion correspond
  to assigning an $f$ to every $e$ in a compatible way: if two edges
  $e = \{j_1,\dots, j_k\}$ and $e' = e\cup\{j'_l\} \setminus\{j_l\}$
  intersect in exactly $k-1$ vertices, and $f$ is assigned to $e$, and
  $f'$ is assigned to $e'$, then $f_l = f'_l$. These assignments are
  in bijection with $\beta \colon \del F \to [q]$, where $\beta$
  corresponds to the assignment assigning $e$ to $\beta(\del e)$.

  Similar to \eqref{eq:count2-U'-expand} we have
  \begin{equation} \label{eq:count2-W-expand}
    t_\alpha(F, \bW) = \sum_{\beta \colon \del F \to [q]} \paren{\prod_{e
        \in F}
      w'_{\alpha(e),\beta(\del e)}} t(\del F, \cR).
  \end{equation}
  Combing \eqref{eq:count2-U'-expand} and \eqref{eq:count2-W-expand}
  using the
  triangle inequality and noting that $0 \leq w'_{i,f} \leq 1$, we have
  \begin{equation} \label{eq:count2-U'-W}
    \abs{t_\alpha(F, \bU') - t_\alpha(F, \bW)} \leq
    \sum_{\beta \colon \del F \to
      [q]} \abs{t_\beta(\del F, \cQ) - t_\beta(\del F, \cR)}.
  \end{equation}
  The lemma follows from combining \eqref{eq:count2-U-U'} and
  \eqref{eq:count2-U'-W} using the triangle inequality.
\end{proof}

\section{Branching partitions} \label{sec:branch}

Now we are almost ready to build the limiting object. We will proceed
by induction on $k$ (for $k$-uniform hypergraphons). The situation is
very simple when $k = 1$, since in this case a hypergraphon is simply
a number between 0 and 1. To build the limiting hypergraphon in
general, we will need to repeatedly apply the weak regularity lemma to
obtain a refining chain of partitions.  Since we need to apply
induction on $k$, we need to have a stronger induction hypothesis that
involves a sequence of not just single hypergraphons, but refining
chains of partitions. This motivates the following definition of a
branching partition, which is a special case a \emph{filtration}, in
the language of probability. See Figure~\ref{fig:branch}.

\begin{definition}

  A \emph{degree} $p = (p_1, p_2, \dots) \in \NN^\NN$ \emph{(symmetric) branching
    partition} $\sP$ of $[0,1]^{r[k]}$ is a collection of symmetric
  subsets $P_i$ of $[0,1]^{r[k]}$, collected into \emph{levels}, where
  each level $\cP_l$ is a symmetric partition of $[0,1]^{r[k]}$:
  \begin{itemize}
  \item Level 0: $\cP_0 = \{ [0,1]^{r[k]} \}$
  \item Level 1: $\cP_1 = \{P_1, P_2, \dots,
    P_{p_1}\}$ is a symmetric partition of $[0,1]^{r[k]}$.
  \item Level $l$ ($l \geq 2$): $\cP_l$ is a refinement of
    $\cP_{l-1}$, where
    each part of $\cP_{l-1}$ gets further refined into exactly $p_l$
    parts.
  \end{itemize}
  An \emph{index} at level $l$ is a tuple $i = (i_1, i_2, \dots, i_l) \in
  [p_1] \x
  [p_2] \x \cdots \x [p_l]$, which points to the symmetric subset $P_i =
  P_{i_1,
    \dots, i_l} \in \cP_l$ at level $l$, where $P_i$ is the $i_l$-th
  part in the
  refinement of the part $P_{i_1, \dots,i_{l-1}}$ at level $l-1$,
  whenever $l
  \geq 2$
  (all partitions are ordered).
\end{definition}

\begin{figure}
  \begin{tikzpicture}[yscale=1]
    \draw [decorate,decoration={brace,amplitude=.2cm}] (-7.3,1)  --
    (-7.3,4.5)
    node [midway,xshift=-.6cm] {$\sP$};
    \node at (-6,4.2) {Level 0: $\cP_0$};
    \node at (-6,3) {Level 1: $\cP_1$};
    \node at (-6,2) {Level 2: $\cP_2$};
    \node at (-6,1.5) {\vdots};
    \node at (0,4.2) {$[0,1]^{r[k]}$};
    \node at (.2,3.5) {\tiny $p_1$};
    \draw (.3,3.9) arc (0:-180:.3);
    \draw (0,3.9) -- +(-3,-.6);
    \draw (0,3.9) -- +(-1,-.6);
    \draw (0,3.9) -- +(1,-.6);
    \draw (0,3.9) -- +(3,-.6);
    \node at (-3,3) {$P_1$};
    \node at (-2.5,2.7) {\tiny $p_2$};
    \draw (-2.7,2.8) arc (0:-180:.3);
    \draw (-3,2.8) -- +(-.8,-.5);
    \draw (-3,2.8) -- +(0,-.5);
    \draw (-3,2.8) -- +(.8,-.5);
    \node at (-1,3) {$P_2$};
    \node at (-0.5,2.7) {\tiny $p_2$};
    \draw (-0.7,2.8) arc (0:-180:.3);
    \draw (-1,2.8) -- +(-.5,-.5);
    \draw (-1,2.8) -- +(0,-.5);
    \draw (-1,2.8) -- +(.5,-.5);
    \node at (1,3) {$\cdots$};
    \node at (3,3) {$P_{p_1}$};
    \draw (3.3,2.8) arc (0:-180:.3);
    \draw (3,2.8) -- +(-.5,-.5);
    \draw (3,2.8) -- +(0,-.5);
    \draw (3,2.8) -- +(.5,-.5);
    \node at (3.5,2.7) {\tiny $p_2$};
    \node at (-3.8,2) {$P_{1,1}$};
    \draw (-3.5,1.8) arc (0:-180:.3); \node at (-3.3,1.6) {\tiny $p_3$};
    \draw (-3.8,1.8) -- +(-.3,-.5);
    \draw (-3.8,1.8) -- +(0,-.5);
    \draw (-3.8,1.8) -- +(.3,-.5);
    \node at (-3,2) {$\cdots$};
    \node at (-2.2,2) {$P_{1,p_2}$};
  \end{tikzpicture}
  \caption{A branching partition} \label{fig:branch}
\end{figure}
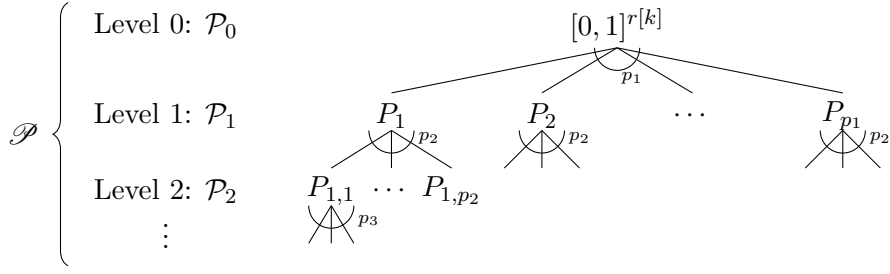

\noindent\emph{Font convention.} $\sP$ is a branching partition, $\cP$ is a
partition, and $P$ is a subset of $[0,1]^{r[k]}$.

\begin{example} \label{ex:W-branch}
  A symmetric subset $P \subseteq [0,1]^{r[k]}$ or a $k$-uniform
  hypergraphon $W$
  (related by \eqref{eq:W^P}) can be thought of as a degree
  $(2,1,1,1,\dots)$
  branching
  partition: level 1 is $P$ and $P^c$ (the complement of $P$ in
  $[0,1]^{r[k]}$) and all subsequent levels are trivial refinements.
\end{example}

We can generalize the notion of regularity from Definition~\ref{def:W-reg}
to
branching partitions as follows.

\begin{definition} \label{def:part-reg}
  Let $\sP$ be a branching partition of $[0,1]^{r[k]}$ and $\sQ$ a
  branching partition of $[0,1]^{r[k-1]}$. We say that $(\sP,\sQ)$ is
  \emph{weakly $(\e_1, \e_2, \dots)$-regular} if for every $s \geq 1$, whenever
  $P
  \subseteq [0,1]^{r[k]}$ is a member of $\sP$ of level at most $s$, and
  $\cQ_s$ is the level $s$ partition of $[0,1]^{r[k-1]}$ in $\sQ$,
  the pair $(P, \cQ_s)$ is weakly $\e_s$-regular.
\end{definition}

\begin{lemma}[Weak regularity lemma for branching partitions]
  \label{lem:branch-reg}
  For every $k \geq 2$, $p = (p_1, p_2, \dots) \in \NN^\NN$ and $\e =
  (\e_1, \e_2, \dots) \in \RR_{> 0}^{\NN}$, we can find a
  $q = (q_1, q_2, \dots) \in \NN^\NN$ so that the following holds: for
  every
  degree $p$
  branching partition $\sP$ of
  $[0,1]^{r[k]}$, there exists a degree $q$ branching partition $\sQ$
  of $[0,1]^{r[k-1]}$ so that $(\sP, \sQ)$ is weakly $\e$-regular.
\end{lemma}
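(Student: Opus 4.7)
The plan is to build $\sQ$ level by level, applying the single-level weak regularity lemma (Lemma~\ref{lem:weak-reg}) at each stage to the cumulative family of sets from $\sP$ seen so far. The key observation is that the total number of such sets depends only on $p$, so the degree vector $q$ can be specified in advance without knowing $\sP$.

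For $s \geq 1$, let $m_s := p_1 + p_1 p_2 + \cdots + p_1 p_2 \cdots p_s$, the total number of parts at levels $1, 2, \ldots, s$ in any degree-$p$ branching partition of $[0,1]^{r[k]}$, and set $q_s := \lceil 2^{k m_s / \e_s^2} \rceil$. These depend only on $k$, $p$, and $\e$, as required. Given any degree-$p$ branching partition $\sP$ of $[0,1]^{r[k]}$, I construct $\sQ$ inductively, starting from $\cQ_0 = \{[0,1]^{r[k-1]}\}$. At stage $s \geq 1$, having built $\cQ_{s-1}$, I form the $m_s$-tuple $\bW^{(s)}$ whose entries are the hypergraphons $W^P$ (via \eqref{eq:W^P}) as $P$ ranges over all sets of $\sP$ at levels $1, \ldots, s$. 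Applying Lemma~\ref{lem:weak-reg} with error $\e_s$ and initial partition $\cQ_{s-1}$ produces a symmetric partition $\cQ_s$ refining $\cQ_{s-1}$, in which every part of $\cQ_{s-1}$ is split into exactly $q_s$ subparts, and such that $(W^P, \cQ_s)$ is weakly $\e_s$-regular for every $P$ of level at most $s$. This is exactly the condition of Definition~\ref{def:part-reg}, so $\sQ = (\cQ_0, \cQ_1, \cQ_2, \ldots)$ is a degree-$q$ branching partition with $(\sP, \sQ)$ weakly $\e$-regular.

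There is no real obstacle; the argument is a direct iteration of Lemma~\ref{lem:weak-reg}, exploiting that the lemma allows any symmetric initial partition. The one point worth flagging is that at stage $s$ the tuple $\bW^{(s)}$ must contain $W^P$ for \emph{every} $P$ of level at most $s$, not merely those at level exactly $s$: Definition~\ref{def:part-reg} demands weak $\e_s$-regularity of $(P, \cQ_s)$ for all such $P$, and since cut-norm regularity is not automatically inherited when the partition on $[0,1]^{r[k-1]}$ is refined, one cannot coast on the tighter bounds secured at earlier stages. Including all lower-level sets in the tuple is what absorbs this issue and is also why $m_s$, rather than $p_1 \cdots p_s$, appears in the exponent defining $q_s$.
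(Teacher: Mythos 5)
Your proposal is correct and follows essentially the same route as the paper: build $\sQ$ level by level, applying Lemma~\ref{lem:weak-reg} at stage $s$ with error $\e_s$, initial partition $\cQ_{s-1}$, and the tuple of hypergraphons $W^P$ for \emph{all} members $P$ of $\sP$ of level at most $s$. The only difference is cosmetic: you take $q_s = \lceil 2^{k m_s/\e_s^2}\rceil$ with $m_s$ the cumulative number of parts, which is in fact the literal output of Lemma~\ref{lem:weak-reg}, whereas the paper writes $q_s = \lceil 2^{k p_1\cdots p_s/\e_s^2}\rceil$; since only the dependence of $q$ on $k,p,\e$ matters, this changes nothing.
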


\begin{proof}
  Take $q_s = \lceil 2^{k(p_1 + p_1p_2 + \cdots + p_1p_2\cdots p_{s})/\e_s^2} \rceil$. We build
  $\sQ$ successively by level. To obtain the level $s$ partition in
  $\sQ$,
  applying Lemma \ref{lem:weak-reg} with $\e = \e_s$, $\bW$ the
  collection of
  hypergraphons corresponding to all members of $\sP$ of level at most
  $s$, and
  $\cQ$ the level $s-1$ partition in $\sQ$.
\end{proof}

Now we introduce two notions of convergence for branching partitions. The first
notion, called \emph{left-convergence}, is based on convergence of homomorphism
densities. The second notion, called \emph{partitionable convergence}, is
based on convergence of regularity partitions. We will show, using our counting
lemmas, that partitionable convergence implies left-convergence.

\begin{notation}
  Given degree $p = (p_1, p_2, \dots)$ branching partitions
  $\sP_1, \sP_2, \dots$ and $\wt \sP$ of $[0,1]^{r[k]}$ and
  degree $q = (q_1, q_2, \dots)$ branching partitions
  $\sQ_1, \sQ_2, \dots$ and $\wt \sQ$ of $[0,1]^{r[k-1]}$, we use the
  following
  notation
  to refer to the partitions and parts in these branching partitions.
  \begin{itemize}
  \item For each $l \geq 1$, $\cP_{n,l}$ is  the level $l$ partition
    in $\sP_n$,
    and
    $\wt\cP_l$ is the level $l$ partition in $\wt\sP$.
  \item For each $s \geq 1$,
    $\cQ_{n,s}$ is the level $s$ partition in $\sQ_n$, and $\wt\cQ_s$
    is the level
    $s$ partition in $\wt\sQ$.
  \item For each index $i = (i_1, i_2, \dots, i_l) \in [p_1]
    \x \cdots \x [p_l]$, $P_{n,i}$ is the index $i$ element of $\sP_n$
    and $\wt
    P_i$
    is the index $i$ element of $\wt \sP$.
  \end{itemize}
\end{notation}

\begin{definition}[Left-convergence: $\sP_n \to \wt \sP$] We say that a
  sequence $\sP_1, \sP_2,\dots$ of degree $p$ branching
  partitions of $[0,1]^{r[k]}$ \emph{left-converges} to
  another degree $p$ branching partition $\wt \sP$ of $[0,1]^{r[k]}$,
  written
  $\sP_n \to \wt \sP$, if
  \begin{equation}\label{eq:left-conv-part}
    \lim_{n \to \infty} t_\alpha(F, \cP_{n,l}) = t_\alpha(F, \wt \cP_l)
    \qquad
    \text{for all } F, l, \alpha
  \end{equation}
  where $F$ ranges over all $k$-uniform hypergraphs, $l$ ranges over all
  positive
  integers, and $\alpha$ ranges over all maps $F \to [p_1 \cdots p_l]$. Recall
  from \eqref{eq:W^part} that $t_\alpha(F,\cP) := t_\alpha(F,\bW^{\cP})$ for a
  partition $\cP$.
\end{definition}

\begin{definition}[Partitionable convergence: $\sP_n \dashto
  \wt \sP$] \label{def:part-converge}
  We say that a
  sequence $\sP_1, \sP_2,\dots$ of degree $p=(p_1, p_2, \dots)$ branching
  partitions of $[0,1]^{r[k]}$ \emph{partitionably converges} to
  another degree $p$ branching partition $\wt \sP$ of $[0,1]^{r[k]}$,
  written $\sP_n \dashto \wt \sP$, if the following
  is satisfied (the definition is inductive on $k$).

  When $k = 1$, for every index $i = (i_1, \dots, i_l) \in [p_1] \x
  \cdots \x [p_l]$, we have $\lim_{n\to \infty} \lambda(P_{n,i}) =
  \lambda(\wt P_i)$, where $\lambda$ is the Lebesgue measure on
  $[0,1]$.

  When $k \geq 2$, there exists some $q \in \NN^\NN$ and degree $q$
  branching partitions $\sQ_1, \sQ_2, \dots$ and $\wt \sQ$ of
  $[0,1]^{r[k-1]}$ satisfying:
  \begin{itemize}
  \item[(a)] $(\sP_n, \sQ_n)$ is weakly $(1, 1/2, 1/3,
    \dots)$-regular for every
    $n$;
  \item[(b)] $\sQ_n \dashto \wt \sQ$ as $n \to \infty$ (defined inductively);
  \item[(c)] For every $s \geq 1$ and every index $i \in [p_1] \x
    \cdots \x
    [p_l]$, one
    has $\lim_{n\to\infty} d_1(P_{n,i}/\cQ_{n,s}, \wt P_i/\wt \cQ_s) = 0$;
  \item[(d)] For every member $\wt P \subseteq [0,1]^{r[k]}$ of $\wt
    \sP$, one has
    $(W^{\wt P})_{\wt\cQ_s} \to W^{\wt P}$ pointwise almost everywhere
    as $s \to
    \infty$.
  \end{itemize}
\end{definition}

\begin{lemma}[Partitionable convergence implies left-convergence]
  \label{lem:part-left}
  If
  $\sP_n \dashto \wt \sP$ then $\sP_n \to \wt\sP$.
\end{lemma}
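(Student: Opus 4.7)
\medskip

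\noindent\textbf{Proof plan for Lemma~\ref{lem:part-left}.}
The plan is to proceed by induction on $k$. The base case $k=1$ is immediate: here $[0,1]^{r_<[1]} = [0,1]^\emptyset$ is a point, so each $W^{P}$ is just the Lebesgue measure $\lambda(P)$, and $t_\alpha(F,\cP) = \prod_{e \in F} \lambda(P_{\alpha(e)})$. Condition (c) of Definition~\ref{def:part-converge} with $k=1$ reads $\lambda(P_{n,i}) \to \lambda(\wt P_i)$, and homomorphism densities are continuous in these measures.

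For the inductive step $k \geq 2$, fix a $k$-uniform hypergraph $F$, a level $l$, and a map $\alpha \colon F \to [p_1\cdots p_l]$, together with an $\e > 0$. Let $\sQ_1, \sQ_2, \dots$ and $\wt\sQ$ be the auxiliary degree-$q$ branching partitions supplied by partitionable convergence. The strategy is the standard three-step approximation. First, choose $s \geq \max(l, 3|F|/\e)$. By condition (a), $(P_{n,i}, \cQ_{n,s})$ is weakly $(1/s)$-regular for every $n$ and every index $i$ of level $\leq l$, so
\[
\snorm{W^{P_{n,i}} - (W^{P_{n,i}})_{\cQ_{n,s}}}_{\square^{k-1}} \leq 1/s,
\]
and Counting Lemma~I (Lemma~\ref{lem:count1}) gives
\[
\abs{t_\alpha(F, \bW^{\cP_{n,l}}) - t_\alpha(F, \bW^{\cP_{n,l}}_{\cQ_{n,s}})} \leq |F|/s \leq \e/3.
\]
Second, using condition (d), $(W^{\wt P_i})_{\wt\cQ_s} \to W^{\wt P_i}$ pointwise almost everywhere as $s\to\infty$; by bounded convergence applied to each of the $|F|$ factors in the integrand defining $t_\alpha$, we may further enlarge $s$ so that
\[
\abs{t_\alpha(F, \bW^{\wt\cP_l}_{\wt\cQ_s}) - t_\alpha(F, \bW^{\wt\cP_l})} \leq \e/3.
\]

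Third, with this $s$ now fixed, we let $n \to \infty$. By condition (c), $d_1(W^{P_{n,i}}/\cQ_{n,s}, W^{\wt P_i}/\wt\cQ_s) \to 0$ for every index $i$; and by the inductive hypothesis applied to the branching partitions of $[0,1]^{r[k-1]}$, condition (b) (namely $\sQ_n \dashto \wt\sQ$) gives $\sQ_n \to \wt\sQ$, so $t_\beta(\del F, \cQ_{n,s}) \to t_\beta(\del F, \wt\cQ_s)$ for every $\beta \colon \del F \to [q_1 \cdots q_s]$. Plugging these into Counting Lemma~II (Lemma~\ref{lem:count2}) yields
\[
\abs{t_\alpha(F, \bW^{\cP_{n,l}}_{\cQ_{n,s}}) - t_\alpha(F, \bW^{\wt\cP_l}_{\wt\cQ_s})} \leq \e/3
\]
for all sufficiently large $n$. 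A triangle inequality combining the three estimates then gives $|t_\alpha(F,\cP_{n,l}) - t_\alpha(F,\wt\cP_l)| \leq \e$ for large $n$, which is the desired left-convergence.

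The main obstacle is keeping the two limits $n \to \infty$ and $s \to \infty$ properly interleaved: the regularity error forces us to take $s$ large first, but Counting Lemma~II for fixed $s$ requires controlling a sum over all $\beta \colon \del F \to [q_1\cdots q_s]$, which is finite but grows with $s$. This is precisely why the inductive hypothesis is needed as a statement about \emph{all} homomorphism densities simultaneously (i.e.~for all $\beta$ and all $s$), rather than for a single pre-specified one.
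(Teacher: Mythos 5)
Your proposal is correct and follows essentially the same route as the paper's proof: induction on $k$, the same three-term triangle-inequality decomposition through $\bW^{\cP_{n,l}}_{\cQ_{n,s}}$ and $\bW^{\wt\cP_l}_{\wt\cQ_s}$, with Counting Lemma~I handling the regularity error via condition (a), Counting Lemma~II together with condition (c) and the inductive hypothesis applied to $\sQ_n \dashto \wt\sQ$ handling the middle term, and condition (d) plus bounded convergence fixing $s$. The only differences are cosmetic bookkeeping ($\e/3$ splitting versus the paper's $2\e$ bound).
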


\begin{proof}
  We use induction on $k$. When $k = 1$, the claim is trivial. Now
  assume $k
  \geq 2$.

  We need to show that \eqref{eq:left-conv-part} holds. Fix $F, l,
  \alpha$. Let $m
  = p_1 \cdots p_s$. Let $\sQ_n$ and $\wt\sQ$ be as in
  Definition~\ref{def:part-converge}, and let  $q=(q_1, q_2, \dots)$ be
  the degree
  of $\wt\sQ$.

  Let $\e > 0$. By Definition~\ref{def:part-converge}(d),
  $\bW^{\wt\cP_l}_{\wt\cQ_s}$ converges pointwise almost everywhere in
  each
  coordinate to $\bW^{\wt\cP_l}$ as $s \to \infty$, so $\lim_{s \to
    \infty}
  t_\alpha(F, \bW^{\wt\cP_l}_{\wt\cQ_s}) = t_\alpha(F, \bW^{\wt\cP_l})$.
  We can
  find an $s \geq \max\{l,\abs{F}/\e\}$ so that $|t_\alpha(F,
  \bW^{\wt\cP_l}_{\wt\cQ_s}) - t_\alpha(F, \cP_l)| \leq \e$. Fix this
  value of
  $s$.

  By Definition~\ref{def:part-converge}(b) we have $\sQ_n \dashto
  \wt\sQ$, so
  $\sQ_n \to \sQ$ by the induction hypothesis. Thus
  \begin{equation} \label{eq:part-left-Q-count}
    \lim_{n \to \infty} t_\beta(\del F, \cQ_{n,s}) = t_\beta(\del F,
    \wt\cQ_{s})
  \end{equation}
  for all  $\beta \colon \del F \to [q_1q_2\cdots q_s]$. See
  Lemma~\ref{lem:count2} for the definition of $\del F$. We have
  \begin{multline} \label{eq:part-left-split}
    |t_\alpha(F, \cP_{n,l}) - t_\alpha(F, \wt \cP_l)| \\
    \leq
    |t_\alpha(F, \cP_{n,l}) - t_\alpha(F, \bW^{\cP_{n,l}}_{\cQ_{n,s}})|
    +
    |t_\alpha(F, \bW^{\cP_{n,l}}_{\cQ_{n,s}}) - t_\alpha(F,
    \bW^{\wt\cP_{l}}_{\wt\cQ_{s}})|
    +
    |t_\alpha(F, \bW^{\wt\cP_l}_{\wt\cQ_s}) - t_\alpha(F, \wt \cP_l)|
  \end{multline}
  As $n \to \infty$, the first term on the right hand side of
  \eqref{eq:part-left-split} has a limsup of at most $\abs{F}/s \leq
  \e$ by Counting Lemma I (Lemma~\ref{lem:count1}) since
  $(P,\cQ_{n,s})$ is $1/s$-regular for every $P \in \cP_{n,l}$ by
  Definition~\ref{def:part-converge}(a).  The second term on the RHS
  of \eqref{eq:part-left-split} goes to zero by Counting Lemma II
  (Lemma~\ref{lem:count2}), Definition~\ref{def:part-converge}(c), and
  \eqref{eq:part-left-Q-count}. The third term on the RHS of
  \eqref{eq:part-left-split} is at most $\e$ using our choice of
  $s$. It follows that $\limsup_{n\to\infty} |t_\alpha(F, \cP_{n,l}) -
  t_\alpha(F, \wt \cP_l)| \leq 2\e$. Since $\e$ can be made arbitarily
  small, we obtain $\lim_{n\to\infty} t_\alpha(F, \cP_{n,l})=
  t_\alpha(F, \wt \cP_l)$ as desired.
\end{proof}

\begin{proposition} \label{prop:part-converge}
  Let $p \in \NN^\NN$. Let $\sP_1, \sP_2 \cdots$ be a sequence of degree
  $p$
  branching partitions of $[0,1]^{r[k]}$. Then there exists another
  degree $p$
  branching partition
  $\wt\sP$ of $[0,1]^{r[k]}$ so that $\sP_n \dashto \wt\sP$ as $n \to
  \infty$
  along some infinite subsequence.
\end{proposition}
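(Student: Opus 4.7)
The plan is to proceed by induction on $k$. The base case $k=1$ is essentially a diagonal extraction: each $\lambda(P_{n,i})$ lies in $[0,1]$, so over the countable set of indices $i$ one can pass to a subsequence so that $\lambda(P_{n,i})\to c_i$ for every $i$. The refinement structure of $\sP_n$ forces the $c_i$'s to satisfy the obvious sum relations (children sum to parent), so one can build $\wt\sP$ by placing nested intervals in $[0,1]$ of lengths $c_i$, taking $\wt P_i$ to be one such interval; then $\sP_n \dashto \wt\sP$ by definition.

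For the inductive step, suppose the result holds for degree-$p$ branching partitions of $[0,1]^{r[k-1]}$. Fix $\e = (1,1/2,1/3,\dots)$. Apply the branching weak regularity lemma (Lemma~\ref{lem:branch-reg}) to each $\sP_n$ to obtain a degree-$q$ branching partition $\sQ_n$ of $[0,1]^{r[k-1]}$ such that $(\sP_n,\sQ_n)$ is weakly $\e$-regular, where $q \in \NN^\NN$ depends only on $p$ and $\e$. By the inductive hypothesis, after passing to a subsequence, $\sQ_n \dashto \wt\sQ$ for some degree-$q$ branching partition $\wt\sQ$ of $[0,1]^{r[k-1]}$. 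Next, for each level $l$, each index $i$ at level $l$ of $\sP_n$, each $s\ge 1$, and each $f\in[q_1\cdots q_s]^k$, the quotient $W^{P_{n,i}}/\cQ_{n,s}$ assigns a pair $(v_{n,i,s,f}, v_{n,i,s,f}w_{n,i,s,f})\in[0,1]^2$. Since the index set is countable, a diagonal argument extracts a further subsequence along which every such pair converges to some limits $(\wt v_{i,s,f},\wt{vw}_{i,s,f})$. Setting $\wt w_{i,s,f}:=\wt{vw}_{i,s,f}/\wt v_{i,s,f}$ (and $0$ when $\wt v_{i,s,f}=0$), define $\wt U_{i,s}\colon [0,1]^{r_<[k]}\to[0,1]$ to be the $\wt\cQ_s$-step function taking value $\wt w_{i,s,f}$ on the cell indexed by $f$, as in \eqref{eq:Q-step}. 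Symmetry of $\wt U_{i,s}$ follows from the symmetry of $\wt\cQ_s$ together with symmetry of the limit data.

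The refinement relations $\cQ_{n,s+1}\preceq \cQ_{n,s}$ and $\cP_{n,l+1}\preceq \cP_{n,l}$ give exact averaging identities among the quotient data at each $n$, and these pass to the limit. Consequently, for $s'\ge s$ one has $(\wt U_{i,s'})_{\wt\cQ_s}=\wt U_{i,s}$, so $(\wt U_{i,s})_{s\ge 1}$ is a bounded martingale with respect to the filtration generated by the cells of $\wt\cQ_s^*$, and for each fixed level $l$ the partition identities $\sum_{i\text{ child of }i'}\wt U_{i,s}=\wt U_{i',s}$ and $\sum_{i\in\wt\cP_l}\wt U_{i,s}=1$ hold. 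The Martingale Convergence Theorem produces $\wt W_i\colon[0,1]^{r_<[k]}\to[0,1]$ with $\wt U_{i,s}\to \wt W_i$ pointwise almost everywhere and $(\wt W_i)_{\wt\cQ_s}=\wt U_{i,s}$ a.e. The desired branching partition $\wt\sP$ is then built level by level by stacking: at level $l$, within each parent slab $\wt P_{i'}$ (from the previous level), one stacks the children $\wt P_i$ along the top coordinate $x_{[k]}$ with fiber thickness $\wt W_i(\bx_{r_<[k]})$, ordered by the index. Because $\wt W_i$ is symmetric in the $r_<[k]$ coordinates and the action of $[k]$ fixes $x_{[k]}$, each $\wt P_i$ is symmetric; by construction $W^{\wt P_i}=\wt W_i$ and the refinement structure is preserved. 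Conditions (a) and (b) of Definition~\ref{def:part-converge} hold by construction, (c) is the convergence of quotient data after verifying $\wt U_{i,s} = W^{\wt P_i}_{\wt\cQ_s}$, and (d) is exactly the martingale a.e.\ convergence.

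The main obstacle is the construction of $\wt\sP$ from the $\wt W_i$'s in a way that is simultaneously a branching partition (respecting the nested refinement structure across levels), symmetric (each $\wt P_i$ invariant under the $[k]$-action on $r[k]$), and compatible with the limit quotient data (so that $W^{\wt P_i}_{\wt\cQ_s}$ is precisely the martingale approximant $\wt U_{i,s}$). The level-by-level stacking procedure, together with the observation that permutations of $[k]$ act trivially on the stacked coordinate $x_{[k]}$, is what makes this possible; all four conditions of partitionable convergence can then be verified with the counting and consistency arguments outlined above.
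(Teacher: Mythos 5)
Your proposal follows essentially the same route as the paper's proof: induction on $k$, the branching weak regularity lemma for condition (a), the inductive hypothesis for (b), diagonal extraction of the quotient data, a martingale of $\wt\cQ_s$-step functions handled by the Martingale Convergence Theorem, and a stacking construction along the top coordinate $x_{[k]}$ (whose invariance under permutations of $[k]$ gives symmetry) to assemble $\wt\sP$ and verify (c) and (d). The one step you gloss over, which the paper makes explicit, is that the extracted limit volumes must be identified with the actual cell volumes of $\wt\cQ_s$ for the $d_1$-convergence in (c) and for the consistency identities to make sense; this follows from left-convergence of $\sQ_n$ (via Lemma~\ref{lem:part-left}) applied to the $(k-1)$-uniform simplex $K_k^{(k-1)}$, whose homomorphism densities are exactly these volumes.
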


\begin{proof}
  We use induction on $k$. The claim is easy when $k = 1$, since we can
  pick a
  subsequence so that for each index $i$, the measure $\lambda(P_{n,i})$
  converges
  to some value $a_i$ as $n \to \infty$, and we can take the limit
  $\wt\sP$ to be
  a branching partition where $\wt P_{i}$ is an interval with length
  $a_i$.

  Now assume $k \geq 2$. By
  Lemma~\ref{lem:branch-reg}, there exists a $q \in \NN^\NN$ so that for
  every $n$
  we can find a degree $q$ branching
  partition $\sQ_n$ of $[0,1]^{r[k-1]}$ so that $(\sP_n, \sQ_n)$ is
  weakly $(1,1/2,1/3,\dots)$-regular, thereby satisfying (a) in
  Definition~\ref{def:part-converge}. Applying the induction hypothesis,
  we can
  restrict to a
  subsequence so that $\sQ_n \dashto \wt \sQ$ for some branching
  partition $\wt \sQ$ of $[0,1]^{r[k-1]}$ (here and onwards in this proof we
  abuse
  notation by only considering convergence as $n \to \infty$ along some
  subsequence. We will be repeatedly taking subsequences, and the
  conclusion will
  follow by a standard diagonalization argument). So (b) is satisfied.

  By further restricting to a subsequence, we may assume that for each
  $s \geq 1$ and each index $i$, the quotient $P_{n,i}/\cQ_{n,s}$
  converges coordinate-wise as $n \to \infty$.  Let $W_{n,i} :=
  W^{P_{n,i}}$ be the hypergraphon associated to $P_{n,i}$.  Let $\wt
  W_{i,s} \colon [0,1]^{r_<[k]} \to [0,1]$ be a symmetric $\wt \cQ_s$-step
  function, with values assigned so that
  $d_1(W_{n,i}/\cQ_{n,s},\wt W_{i,s}/\wt \cQ_s) \to 0$ as $n \to \infty$. This
  is possible since we previously assumed that $P_{n,i}/\cQ_{n,s}$
  converges coordinatewise as $n \to \infty$, so that are now simply
  putting in the limiting values of the ``average'' coordinates into a
  template for a symmetric $\wt \cQ_s$-step function in order to construct
  $\wt W_{i,s}$. To see that the ``volume'' coordinates~\eqref{eq:vol}
  of $\cQ_{n,s}$ converge to those of $\wt \cQ_s$, note that this
  amount to the claim that $\lim_{n\to\infty}
  t_\beta(K_k^{(k-1)},\cQ_{n,s}) = t_\beta(K_k^{(k-1)},\wt\cQ_s)$ for
  every $\beta \colon K_k^{(k-1)} \to [q]$, where $K_k^{(k-1)}$ is the
  $(k-1)$-uniform simplex, i.e., the collection of all $(k-1)$-element
  subsets of $[k]$. The convergence of these homomorphism densities
  follows from $\sQ_n \to \wt \sQ$ which in turn follows from $\sQ_n
  \dashto \wt \sQ$ and Lemma~\ref{lem:part-left}.

  \medskip\noindent
  \emph{Claim 1.} $(\wt W_{i,s+1})_{\wt \cQ_s} = \wt W_{i,s}$.
  \smallskip

  \noindent\emph{Proof of Claim 1.} We have
  \begin{equation} \label{eq:limit-claim1a}
    \lim_{n \to \infty} d_1(W_{n,i}/\cQ_{n,s}, \wt W_{i,s} / \wt\cQ_s) = 0
  \end{equation}
  and
  \begin{equation}\label{eq:limit-claim1b}
    \lim_{n \to \infty} d_1(W_{n,i}/\cQ_{n,s+1}, \wt W_{i,s+1} /
    \wt\cQ_{s+1}) = 0
  \end{equation}
  Since $\wt \cQ_{s+1}$ is a refinement of $\wt \cQ_s$, by merging together
  parts in $W_{n,i}/\cQ_{s+1}$ and $\wt W_{i,s+1} / \wt \cQ_{s+1}$, we
  deduce from
  \eqref{eq:limit-claim1b}
  \begin{equation}\label{eq:limit-claim1c}
    \lim_{n \to \infty} d_1(W_{n,i}/\cQ_{n,s}, \wt W_{i,s+1} / \wt\cQ_{s})
    = 0
  \end{equation}
  From \eqref{eq:limit-claim1a} and \eqref{eq:limit-claim1c} we obtain
  $\wt
  W_{i,s+1} / \wt \cQ_{s} = \wt W_{i,s} / \wt \cQ_s$, which implies $(\wt
  W_{i,s+1})_{\wt \cQ_s} = \wt W_{i,s}$  since both sides are $\wt \cQ_s$-step
  functions
  $\square$ \medskip

  It follows that $\wt W_{i,1}, \wt W_{i,2}, \wt W_{i,3}, \dots$ is a
  martingale
  with respect to the filtration\footnote{To be more precise, let
    $[0,1]^{r[k]}$
    be the probability space equipped with the uniform Lebesgue
    measure. For each $s
    \geq 1$ let $\cB_s$ be the minimal $\sigma$-algebra on
    $[0,1]^{r[k]}$ generated
    by functions of the form $1_Q(\bx_{r([k]\setminus\{j\})})$ ranged
    over $Q \in
    \cQ_s$ and $j \in [k]$. Then $\wt W_{i,s}$ is a $\cB_s$-measurable
    random
    variable, and Claim 1 implies that $\wt W_{i,1}, \wt W_{i,2},
    \cdots$ is a
    martingale adapted to the filtration $\cB_1 \subseteq \cB_2
    \subseteq \cdots$}
  induced by $\wt \cQ_1, \wt \cQ_2, \dots$. By the Martingale
  Convergence Theorem,
  there exists some $\wt W_i$, so that $\wt W_{i,s} \to \wt W_i$
  pointwise almost
  everywhere as $s \to \infty$. Furthermore
  $(\wt W_i)_{\wt \cQ_s} = \wt W_{i,s}$.

  \medskip \noindent \emph{Claim 2.} Let $l \geq 1$, and let $i = (i_1, \dots,
  i_{l-1})
  \in [p_1] \x \cdots \x [p_{l-1}]$ an index at level $l-1$, which points to a
  part
  in $\cP_1$ that splits into indices
  $\{j_1, \dots, j_{p_l}\} = i \x [p_l]$ at level $l$. Then
  \[
  \wt W_{j_1} + \cdots + \wt W_{j_{p_l}} = \wt W_i \quad \text{almost
    everywhere}.
  \]

  \noindent\emph{Proof of Claim 2.} Since $P_{n,j_1}, \dots,
  P_{n,j_{p_s}}$  is a
  partition of $P_{n,i}$, we have
  \[
  W_{n,j_1} + \cdots +  W_{n,j_{p_s}} =  W_{n,i}
  \]
  Taking the $\cQ_{n,s}$ quotient of both sides and then take the limit
  as $n \to
  \infty$, we find the following equality for these $\wt \cQ_s$-step
  functions.
  \[
  \wt W_{j_1,s} + \cdots + \wt W_{j_s,s} = \wt W_{i,s}
  \]
  Taking $s \to \infty$ and using the pointwise almost everywhere
  convergence of
  $\wt W_{j,s} \to \wt W_{j}$ as $s \to \infty$ for every index $j$, we
  obtain
  Claim 2. $\square$ \medskip

  Claim 2 tells us that we can find a branching partition $\wt \sP$ of
  $[0,1]^{r[k]}$ so that the part $\wt P_i$ satisfies $W^{\wt P_i} = \wt
  W_i$.
  Visually we can build the level $s$ of $\wt \sP$ by stacking together
  subsets of
  $[0,1]^{r[k]}$ that correspond to $\wt W_j$, ranged over all indices
  $j$ at
  level $s$. Then $P_{n,i}/\cQ_{n,s} = W_{n,i}/\cQ_{n,s} \to_{d_1} \wt
  W_{i,s} / \wt \cQ_{s} = \wt W_i / \wt \cQ_{s}=  \wt P_i / \wt \cQ_s$,
  so (c)
  is satisfied. Also (d) is satisfied since $W^{\wt P_i}_{\wt \cQ_s} =
  \wt W_{i,s} \to \wt W_i = W^{\wt P_i}$ pointwise almost everywhere as
  $s \to \infty$ (from our application of the Martingale Convergence
  Theorem).
\end{proof}

\begin{proof}[Proof of Theorem~\ref{thm:hypergraphon-converge-limit}]
  Let $\sP_n$ be the degree $(2,1,1,1,\dots)$ branching partition built
  from $W_n$
  as in Example~\ref{ex:W-branch}. Proposition~\ref{prop:part-converge}
  implies
  that there exists a branching partition $\wt \sP$ so that $\sP_n
  \dashto \wt\sP$
  along a subsequence, and hence $\sP_n \to \wt \sP$ along a subsequence by
  Lemma~\ref{lem:part-left}.
  Let $\wt P$ be the index $(1)$ element of $\wt \sP$. The associated
  hypergraphon $\wt W = W^{\wt P}$ is the desired limit of $W_n$. By
  applying
  \eqref{eq:left-conv-part} with $l = 1$ and $\alpha \equiv 1$, we see
  that
  $t(F,W_n) \to
  t(F,\wt W_n)$ along the subsequence.
\end{proof}

We conclude the paper with a conjecture that partitionable convergence is
equivalent to left-convergence, thereby proposing a converse to
Lemma~\ref{lem:part-left}.

\begin{conjecture} \label{conj:left-part}
  $\sP_n \to \wt\sP$ if and only if $\sP_n \dashto \wt\sP$.
\end{conjecture}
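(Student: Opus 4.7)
The forward direction $\sP_n \dashto \wt\sP \Rightarrow \sP_n \to \wt\sP$ is exactly Lemma~\ref{lem:part-left}, so only the reverse implication requires proof. The plan is to combine the compactness result of Proposition~\ref{prop:part-converge} with a uniqueness principle for hypergraphon limits, via a standard subsequence argument.

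Assume $\sP_n \to \wt\sP$. I would first apply Lemma~\ref{lem:branch-reg} to each $\sP_n$ to obtain a degree $q$ branching partition $\sQ_n$ of $[0,1]^{r[k-1]}$ making $(\sP_n, \sQ_n)$ weakly $(1,1/2,1/3,\dots)$-regular. Applying Proposition~\ref{prop:part-converge} inductively in $k$, after passing to a subsequence we may assume $\sQ_n \dashto \wt\sQ$ for some branching partition $\wt\sQ$ of $[0,1]^{r[k-1]}$, and then after a further subsequence that $\sP_n \dashto \wt\sP^*$ for some branching partition $\wt\sP^*$ of $[0,1]^{r[k]}$. Lemma~\ref{lem:part-left} applied to this subsequence gives $\sP_n \to \wt\sP^*$, so combined with the hypothesis $\sP_n \to \wt\sP$ we deduce
\[
t_\alpha(F, \wt\cP^*_l) = t_\alpha(F, \wt\cP_l) \qquad \text{for all } F,\, l,\, \alpha.
\]

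The remaining task, and the main obstacle, is to conclude from this equality of homomorphism densities that $\wt\sP^*$ and $\wt\sP$ are equivalent in a strong enough sense for $\sP_n \dashto \wt\sP^*$ to imply $\sP_n \dashto \wt\sP$. For $k=2$ one would invoke the classical fact that a graphon is determined by its homomorphism densities up to weak isomorphism (a measure-preserving rearrangement of $[0,1]$). For $k \geq 3$ the analogous uniqueness theorem is far more delicate, because hypergraphons admit a rich collection of equivalences involving independent measure-preserving relabelings of each of the $2^k - 2$ coordinates, and no analytic description analogous to the cut metric is currently available (compare the discussion following Corollary~\ref{cor:compact}). Any proof must either establish such a uniqueness theorem intrinsically --- perhaps via an inverse regularity result that reconstructs the $\wt\cQ_s$-quotients of $\wt\sP$ directly from homomorphism densities of $F$ for sufficiently rich families of $F$ --- or reinterpret Definition~\ref{def:part-converge} modulo this equivalence.

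Granted such a uniqueness result, the conclusion is routine: every subsequence of $(\sP_n)$ contains a sub-subsequence partitionably convergent to a partition equivalent to $\wt\sP$, and one can then build a single pair $(\sQ_n, \wt\sQ)$ verifying Definition~\ref{def:part-converge} for $\wt\sP$ by a diagonal argument over refining regularity scales $\e_s \to 0$. I expect the uniqueness question to be essentially as hard as the open problem of finding a useful cut metric for hypergraphs that agrees with left-convergence, which the author explicitly flags as an obstruction; in this sense Conjecture~\ref{conj:left-part} should be viewed as closely tied to that broader metric question rather than an incremental consequence of the machinery already developed.
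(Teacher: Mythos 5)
The statement you are addressing is not proved in the paper at all: it is stated as Conjecture~\ref{conj:left-part} and explicitly left open, so there is no proof of the paper's to compare yours against. Your proposal is likewise not a proof. The forward direction is indeed Lemma~\ref{lem:part-left}, and your subsequence scheme (regularize via Lemma~\ref{lem:branch-reg}, extract a partitionable limit $\wt\sP^*$ via Proposition~\ref{prop:part-converge}, conclude $t_\alpha(F,\wt\cP^*_l)=t_\alpha(F,\wt\cP_l)$ for all $F,l,\alpha$) is a sensible reduction, but the step you yourself flag as ``the remaining task'' is precisely the open content of the conjecture, and you do not supply it. No uniqueness theorem for $k$-uniform hypergraphons ($k\ge 3$) determined by homomorphism densities up to a usable notion of equivalence is available in this paper or invoked from elsewhere, so the argument stops exactly where the difficulty begins.

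Two further points sharpen the gap. First, even granting a uniqueness principle ``up to weak isomorphism,'' the conjecture asserts $\sP_n\dashto\wt\sP$ for the \emph{given} representative $\wt\sP$, and Definition~\ref{def:part-converge} is not obviously invariant under such equivalences: condition (c) compares quotients $d_1(P_{n,i}/\cQ_{n,s}, \wt P_i/\wt\cQ_s)$ against a specific auxiliary sequence $\wt\cQ_s$, and condition (d) demands $(W^{\wt P})_{\wt\cQ_s}\to W^{\wt P}$ pointwise a.e.\ for that same $\wt\cQ$; transporting these from $\wt\sP^*$ to a merely weakly isomorphic $\wt\sP$ requires constructing a new $\wt\sQ$ compatible with the given representative, which is itself a nontrivial (and unaddressed) task even for $k=2$. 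Second, your closing ``routine'' diagonalization produces convergence along subsequences of regularity scales, but you would still need the data $(\sQ_n,\wt\sQ)$ to work simultaneously for the full sequence $n\to\infty$, not just along the extracted sub-subsequences; the standard ``every subsequence has a further subsequence converging to the same limit'' argument applies cleanly to convergence of real numbers (as in the passage from Theorem~\ref{thm:hypergraphon-subseq} to Theorem~\ref{thm:hypergraphon-converge-limit}), but partitionable convergence is an existential statement about auxiliary branching partitions, and you have not shown it enjoys this subsequence principle. In short: your write-up is an honest reduction plus an accurate assessment that the missing ingredient is tied to the absence of a hypergraph cut metric, but as a proof of Conjecture~\ref{conj:left-part} it is incomplete.
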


\section*{Acknowledgments}

The author would like to thank Jacob Fox, L\'aszl\'o Lov\'asz,
Jennifer Chayes and Christian Borgs for helpful conversations, and
also the anonymous referees for helpful comments and for pointing out
the connections between graph limits and exchangeable random arrays.

%\bibliographystyle{abbrv}
%\bibliography{ref_hyplim}

\end{document}